\newcommand{\ncom}{\newcommand}
\ncom{\ul}{\underline}
\ncom{\beq}{\begin{equation}}
\ncom{\eeq}{\end{equation}}
\ncom{\bea}{\begin{eqnarray*}}
\ncom{\eea}{\end{eqnarray*}}
\ncom{\beqa}{\begin{eqnarray}}
\ncom{\eeqa}{\end{eqnarray}}
\ncom{\nno}{\nonumber}
\ncom{\non}{\nonumber}
\ncom{\ds}{\displaystyle}
\ncom{\half}{\frac{1}{2}}
\ncom{\mbx}{\makebox{.25cm}}
\ncom{\hs}{\mbox{\hspace{.25cm}}}
\ncom{\rar}{\rightarrow}
\ncom{\Rar}{\Rightarrow}
\ncom{\noin}{\noindent}
\ncom{\bc}{\begin{center}}
\ncom{\ec}{\end{center}}
\ncom{\sz}{\scriptsize}
\ncom{\rf}{\ref}
\ncom{\s}{\sqrt{2}}
\ncom{\sgm}{\sigma}
\ncom{\Sgm}{\Sigma}
\ncom{\psgm}{\sigma^{\prime}}
\ncom{\dt}{\delta}
\ncom{\Dt}{\Delta}
\ncom{\lmd}{\lambda}
\ncom{\Lmd}{\Lambda}
\ncom{\Th}{\Theta}
\ncom{\e}{\eta}
\ncom{\eps}{\epsilon}
\ncom{\pcc}{\stackrel{P}{>}}
\ncom{\lp}{\stackrel{L_{p}}{>}}
\ncom{\dist}{{\rm\,dist}}
\ncom{\sspan}{{\rm\,span}}
\ncom{\re}{{\rm Re\,}}
\ncom{\im}{{\rm Im\,}}
\ncom{\sgn}{{\rm sgn\,}}
\ncom{\ba}{\begin{array}}
\ncom{\ea}{\end{array}}
\ncom{\hone}{\mbox{\hspace{1em}}}
\ncom{\htwo}{\mbox{\hspace{2em}}}
\ncom{\hthree}{\mbox{\hspace{3em}}}
\ncom{\hfour}{\mbox{\hspace{4em}}}
\ncom{\vone}{\vskip 2ex}
\ncom{\vtwo}{\vskip 4ex}
\ncom{\vonee}{\vskip 1.5ex}
\ncom{\vthree}{\vskip 6ex}
\ncom{\vfour}{\vspace*{8ex}}
\ncom{\norm}{\|\;\;\|}
\ncom{\integ}[4]{\int_{#1}^{#2}\,{#3}\,d{#4}}
\ncom{\vspan}[1]{{{\rm\,span}\{ #1 \}}}
\ncom{\dm}[1]{ {\displaystyle{#1} } }
\ncom{\ri}[1]{{#1} \index{#1}}
\newtheorem{theorem}{\bf Theorem}[section]
\newtheorem{remark}{\bf Remark}[section]
\newtheorem{proposition}{Proposition}[section]
\newtheoremstyle
    {remarkstyle}
    {}
    {11pt}
    {}
    {}
    {\bfseries}
    {:}
    {     }
    {\thmname{#1} \thmnumber{#2} }
\theoremstyle{remarkstyle}
\begin{document}

\begin{center}
{\Large \bf GARTFIMA  Process and its Empirical Spectral Density Based Estimation}
\end{center}
\vone
\vone
\begin{center}
{Niharika Bhootna}$^{\textrm{a}}$, {Arun Kumar}$^{\textrm{a*}}$
 		$$\begin{tabular}{l}
 		$^{\textrm{a}}$ \emph{Department of Mathematics, Indian Institute of Technology Ropar, Rupnagar, Punjab - 140001, India}\\
 		$^{\textrm{*}}$ \emph{Correspondence: arun.kumar@iitrpr.ac.in}
 \end{tabular}$$
 \end{center}
\vtwo
\begin{center}
\noindent{\bf Abstract}
\end{center}
In this article, we introduce a Gegenbauer autoregressive tempered fractionally integrated moving average (GARTFIMA) process. We work on the spectral density and autocovariance function for the introduced process. The parameter estimation is done using the  empirical spectral density with the help of the nonlinear least square technique and  the Whittle likelihood estimation technique. The performance of the proposed estimation techniques is assessed on simulated data. Further, the introduced process is shown to better model the real-world data in comparison to other time series models.

\vtwo

\noindent{\it Keywords:} Fractional ARIMA processes, tempered fractional ARIMA processes, Gegenbauer processes, spectral density, parameter estimation.
\vone

\section{Introduction}
The autoregressive (AR), moving average (MA) and autoregressive moving average (ARMA) models were introduced by Yule (1926) \cite{Yule1926}, Slutsky (1937) \cite{Slutsky1937} and Wold (1938) \cite{Wold1938} respectively to model stationary time series data. A time series is stationary if its statistical properties remain unchanged over time. Further, it is argued in \cite{Wold1938} that all stationary time series can be modeled using ARMA processes as long as the appropriate number of AR and MA terms are specified. However, in many real-life scenarios the observed data need not be stationary. Latter in 1976 Box $\&$ Jenkins \cite{Box1976} developed the autoregressive integrated moving average ARIMA$(p, d, q)$ process to model non-stationary time series by introducing the difference operator $d$ in the traditional ARMA$(p, q)$ process. The ARIMA$(p, d, q)$ process is concerned with modeling a non-stationary time series by taking $d$th order differencing of the process to make it stationary and then using ARMA$(p, q)$ model with appropriate $p$ and $q$. However, data exhibiting long-range dependence (LRD) cannot be modeled using ARIMA$(p, d, q)$ process with $d$ as an integer. The data exhibiting LRD behaviours have been found in various fields such as finance, economics, geophysics and agriculture see e.g. \cite{Beran1994, Robinson2003} and references therein. The LRD series evince substantial correlation after large lags. The LRD process is essential to study as they exhibit non-instinctive properties which may not be captured using the traditional time series models. Hosking (1981) \cite{Hosking1981} generalized the Box $\&$ Jenkins approach by introducing a fractional differencing operator which can be represented by infinite binomial series expansion in powers of backward shift operator in the ARMA$(p, d, q)$ model. The process is known as autoregressive fractionally integrated moving average or in short called ARFIMA$(p, d, q)$ process. Here $d\in\mathbb{R}$ and the process is stationary for $|d|<1/2$ and it has LRD property for $d\in(0, 0.5)$. For a long memory stochastic process ${X_t}$, the autocorrelation function for a lag $h$ denoted by $\rho(h)$ behaves asymptotically as $\rho(h) \sim K h^{-\alpha}$ as $h\rightarrow  \infty$ for some nonzero constant $K$ and positive $\alpha \in (0, 1)$, see \cite{Chung1996, Beran1994}. Alternatively, an LRD process is a process whose autocovariance function or autocorrelation function is not absolutely summable or in the frequency domain, it is a process whose power spectral density is not everywhere bounded. In recent years tempered processes are studied in great detail \cite{Rosinski2007, Kumar2014, Sabzikar2018, Gupta2020, Bhootna2022}. These processes are obtained by exponential tempering in the original process.  The ARTFIMA models were introduced as a generalization of ARFIMA model in \cite{Sabzikar2019}. According to \cite{Sabzikar2019}, it is more convenient to study ARTFIMA time series as the covariance function is absolutely summable in finite variance cases and the spectral density converges to zero \cite{Gray1989}. Also in the spectral density of ARFIMA process is unbounded as the frequency approaches to $0$. In many scenarios, there exist many datasets for which the power spectrum is bounded as frequency approaches to $0$ and these types of datasets cannot be models using ARFIMA process. An extension of the Fractional ARIMA process is proposed to model long-term seasonal and periodic behaviours and is referred to as Gegenbauer autoregressive moving average (GARMA) process \cite{Gray1989}. The GARMA process uses the properties of Gegenbauer generating function to model a time series. For Gegenbaur random fields see e.g. \cite{Espejo2014}.\\

\noindent In this article we introduce and study Gegenbauer autoregressive integrated moving average (GARTFIMA) process, which is a further extension to GARMA process as well as ARTFIMA process by including the tempering parameter $\lambda$ and Gegenbauer shift operator in ARFIMA process. The GARTFIMA process can be used to model datasets with periodic behaviours having a bounded spectrum for frequencies near $0$. 

The rest of the paper is organized as follows. Section $\ref{section2}$ provides the study of some predefined fractionally integrated processes named ARFIMA, ARTFIMA and GARMA processes. Also, the section includes the basic properties of these processes such as stationarity, long memory, the form of spectral density and autocovariance function. Section $\ref{section2}$ also includes a new parameter estimation technique for the ARTFIMA process which is based on non-linear regression introduced by Reisen (1994) \cite{Reisen1994}. A simulation study is also done to check the performance of the estimation method. In section $\ref{section3}$, the GARTFIMA process is introduced and the corresponding form of spectral density is obtained. Further, the autocovariance of the process is found by taking the inverse Fourier transform of the spectral density which does not have an explicit closed form expression and can be represented using the coefficients of Gegenbauer polynomials. Moreover, the stationarity and invertibility conditions are also discussed for GARTFIMA process. The parameter estimation techniques for GARTFIMA process are provided in section \ref{section4}. To estimate the parameters of the process first approach is based on non-linear regression and the second is based on the Whittle likelihood estimation and the performance is assessed by doing a simulation study for both the methods where the comparison of actual and estimated parameters are demonstrated using the box-plots. The comparison of the defined model with ARFIMA and ARTFIMA models are also presented in this section. The last section concludes the paper.

\section{Some fractionally integrated processes}\label{section2}
In this section, we recall the main properties of ARMA, ARIMA, ARFIMA, ARTFIMA and GARMA processes, which are used in Section 3 to introduce and study the GARTFIMA process.
The well-known ARMA$(p, q)$ process, which incorporates both the autoregressive AR and moving average MA models with lags $p$ and $q$ respectively, is the most effectual way of modeling a stationary time series. The process is defined as follows
\begin{align}\label{def_arma}
\Phi(B)X_t=\Theta(B) \epsilon_t,
\end{align}
where $\epsilon_t$ is Gaussian white noise with variance $\sigma^2$. Also, $\Phi(B)$ and $\Theta(B)$ are ARMA operators defined as
\begin{equation}\label{ARMA_operators}
\Phi(B)=1-\sum_{j=1}^{p} \phi_j B^j \;\; \mbox{and}\;\; \Theta(B)=1+\sum_{j=1}^{q}\theta_j B^j,
\end{equation}
where $B$ is the backward shift operator defined as $B^j(X_t)=X_{t-j}$. 
The ARMA process does not work with series exhibiting non-stationary behavior. As a result, in many scenarios, ARMA processes are unsuitable for modeling occurrences. By introducing the difference parameter $d$, the autoregressive integrated moving average (ARIMA) process is an extension of the ARMA process described above. The goal is to use integer order differencing to make data stationary. Using, integer order differencing operator $(1-B)^d$ in \eqref{def_arma}, the ARIMA process is defined by
\begin{align}\label{def_ARIMA}
\Phi(B) (1-B)^d X_t= \Theta(B) \epsilon_t,
\end{align}
where $d\in \{0,1,2,\hdots\}$ is the order of differencing. The models like AR, MA, and ARMA are special cases of this general ARIMA$(p, d, q)$ model defined in \eqref{def_ARIMA}. In practice, for AR, MA, ARMA and ARIMA models, the unknown parameters can be estimated using the least square estimation or the maximum likelihood estimation easily using a direct library available in different programming languages. 

\noindent The next subsection covers some fractionally integrated processes which are generalizations of ARMA process which are useful in modeling of non-stationary and long-memory type real-world data.

\subsection{Autoregressive fractionally integrated moving average (ARFIMA) Process}\label{ARFIMA}
The autocorrelation function for classical time series models such as AR, MA, ARMA, and ARIMA processes decays exponentially, making them short memory processes. However, this characteristic does not appear to be common in many empirical time series. Some series appears to have a long memory property, which means that the correlation between data in a series can persist over a longer time periods. The traditional models cannot be used in such scenarios. The ARFIMA process is a generalized ARIMA$(p, d, q)$ model defined  in \cite{Hosking1981}, which aims to explain both short-term and long-term persistence in data. The model is generalized by taking the order of differencing $d$ to be any real value instead of being an integer in the traditional ARIMA process. For some recent articles with applications of ARFIMA process in modelling of series with long memory see e.g. \cite{Vera-Valdes2021, Lahmiri2021, Katris2021, Mainassara2021, Peters2021}. The fractional ARIMA$(p, d, q)$ or ARFIMA$(p, d, q)$ process is defined as follows \cite{Hosking1981}
\begin{align}\label{arfima}
\Phi(B) (1-B)^d X_t= \Theta(B) \epsilon_t,
\end{align}
where $(1-B)^d$ is defined as follows 
\begin{align*}
    (1-B)^d=\sum_{k=0}^{\infty}\frac{\Gamma(k-d)B^k}{\Gamma(-d)\Gamma(k+1)}.
\end{align*}

\noindent The process is stationary and invertible for $|d|\leq \frac{1}{2}$. Rewrite \eqref{arfima} as follows 
\begin{align*}
    X_t=\Psi_x(B)\epsilon_t,
\end{align*}
where $\Psi(B)= \dfrac{\Theta(B)}{\Phi(B)} \Delta^{d}$ and $\Delta^{d}=(1-B)^{-d}$. Then for $z=e^{-\iota\omega}$ the spectral density of $X_t$ takes the following form \cite{Hosking1981}
\begin{align*}
f_x(\omega)&=|\Psi(z)|^2f_\epsilon(\omega)= \frac{\sigma^2}{2\pi}\frac{|\Theta(z)|^2}{|\Phi(z)|^2} (2\sin(\omega/2))^{-2d}.
\end{align*}

\noindent For $p=0$ and $q=0$ the spectral density takes the following form
\begin{align*}
f_x(\omega)=(2\sin(\omega/2))^{-2d}
\end{align*}

\noindent and the corresponding autocorrelation function $\gamma(h)$ for lag $h$ can be calculated by taking the inverse Fourier transform of spectral density. The relation between autocovariance function and spectral density is given by
\begin{align}\label{1}
\gamma(h)&= \int_{-\pi}^{\pi}f_x(\omega) e^{\iota\omega h} d\omega.
\end{align}
\noindent For ARFIMA process with lags $p=0$ and $q=0$ the autocovariance is given by \cite{Hosking1981}
\begin{align*}
\gamma(h)&=\frac{(-1)^h(-2d)!}{(h-d)!(-h-d)!}
\end{align*}

\subsection{Autoregressive tempered fractionally integrated moving average (ARTFIMA) process} \label{ARTFIMA} 

In this section, we introduce the main properties of ARTFIMA process. Also, we provide an alternative technique to estimate the parameters of the ARTFIMA process. The efficacy of the estimation procedure is checked on the simulated data. The ARTFIMA$(p, d, \lambda, q)$ process defined by \cite{Sabzikar2019} generalizes the ARFIMA process defined in \eqref{arfima}. The model is defined by introducing a tempering parameter $\lambda$ in ARFIMA model, that is instead of taking the fractional shift operator $(1-B)^d$ the authors have considered a tempered fractional shift operator $(1-e^{-\lambda}B)^d$, where $\lambda>0$. The series exhibits semi-long range dependence structure that is for $\lambda$ close to $0$ the autocovariance function of the process behaves similar to a long memory process and for large $\lambda$ the autocovariance  function decays exponentially \cite{Sabzikar2019}. The ARTFIMA process is defined by
\begin{align}\label{artfima}
\Phi(B) (1-e^{-\lambda}B)^d X_t= \Theta(B) \epsilon_t.
\end{align}
The equation \eqref{artfima} is rewritten as
\begin{align*}
      X_t=\Psi_x(B)\epsilon_t,
\end{align*}

\noindent where $\Psi(B)= \dfrac{\Theta(B)}{\Phi(B)} \Delta^{d,\lambda}$ and $\Delta^{d,\lambda}=(1-e^{-\lambda}B)^{-d}$. The spectral density of $X_t$ takes the following form \cite{Sabzikar2019}
\begin{align}\label{2}
    f_x(\omega)=|\Psi(z)|^2f_\epsilon(\omega)
    =\frac{|\Theta(e^{-\iota\omega})|^2}{|\Phi(e^{-\iota\omega})|^2} |1-e^{-(\lambda+\iota\omega)}|^{-2d}
    =\frac{|\Theta(e^{-\iota\omega})|^2}{|\Phi(e^{-\iota\omega})|^2}(1-2e^{-\lambda}\cos(\omega)+e^{-2\lambda})^{-d}.
\end{align}

\noindent Taking $Y_t=\Delta^{d,\lambda}X_t$ the model takes the following form
\begin{align*}
Y_t-\sum_{j=1}^{p}\phi_jY_{t-j}=\epsilon_t+\sum_{i=1}^{q}\theta_i\epsilon_{t-i},
\end{align*}
which is the ARMA$(p, q)$ process. The ARTFIMA process is stationary for $d\in \mathbb{Z}$ and $\lambda>0$ (see \cite{Sabzikar2019}). For $p=0$ and $q=0$ the autocovariance function of ARTFIMA$(0, d, \lambda, 0)$ process is defined as

\begin{align*}
\gamma(h)=\frac{\sigma^2\Gamma(2d+h)}{\Gamma(2d)\Gamma(h+1)} {}_2F_1(2d,h+2d;h+1;e^{-2\lambda}),
\end{align*}
where ${}_2F_1(a,b;c;z)$ is Gaussian hypergeometric function defined as

\begin{align*}
{}_2F_1(a,b;c;z)=1+\frac{a.b}{c.1}z+\frac{a(a+1)b(b+1)}{c(c+1)2!}z^2+\hdots.
\end{align*}

\subsubsection{Parameter estimation for ARTFIMA process}
The parameter estimation for ARTFIMA process is discussed in \cite{Sabzikar2019} using the Whittle likelihood based estimation technique. Here we provide an alternative estimation technique based on empirical spectral density using non-linear least square approach to estimate the parameters $d$ and $\lambda$. We establish a nonlinear regression equation between empirical and actual spectral densities to estimate the unknown parameters.
\noindent In \eqref{artfima}, assuming $(1-e^{-\lambda}B)X_t=U_t$, the spectral density for the ARMA$(p, q)$ process $\phi(B)U_t=\theta(B)\epsilon_t$, is given by
\begin{align}\label{SPD1}
f_u(\omega)=\dfrac{\sigma^2}{2\pi}\dfrac{|\Theta(e^{-\iota\omega})|^2}{|\Phi(e^{-\iota\omega})|^2}.
\end{align}
Using \eqref{SPD1} and \eqref{2}, the spectral density of $X_t$ can be written as

\begin{align}\label{S1}
f_x(\omega)&=f_u(\omega)(1-2e^{-\lambda}\cos(\omega)+e^{-2\lambda})^{-d}.
\end{align}

\noindent Consider $\omega_j=\dfrac{2\pi j}{n}$, $j=0,1,\cdots,[n/2]$. Here $n$ is the sample size and $\omega_j$ is the set of harmonic frequencies. Taking natural logarithm on both sides of \eqref{S1} and with some manipulation, it follows
\begin{align*}
 \log\{f_x(\omega_j)\}&=\log\{f_u(0)\}+\log\{f_u(\omega_j)\}-d \log\{(1-2e^{-\lambda}\cos(\omega)+e^{-2\lambda})\}-\log\{f_u(0)\}\\
 &=\log\{f_u(0)\}-d \log\{(1-2e^{-\lambda}\cos(\omega)+e^{-2\lambda})\}+\log\Bigg\{\dfrac{f_u(\omega_j)}{f_u(0)}\Bigg\}.
\end{align*}
Adding $\log\{I(\omega_j)\}$ on both sides
\begin{align}\label{ref1}
 \log\{I(\omega_j)\}&=\log\{f_u(0)\}-d \log\{(1-2e^{-\lambda}\cos(\omega)+e^{-2\lambda})\}+\log\Bigg\{\dfrac{f_u(\omega_j)}{f_u(0)}\Bigg\}+\log\Bigg\{\dfrac{I(\omega_j)}{f(\omega_j)}\Bigg\},
\end{align}
where $I(\omega_j)$ is known as the periodogram or empirical spectral density of the process $\{X_t\}$ stated as 
\begin{align}\label{Iw}
I_x(\omega)=\dfrac{1}{2\pi}\Bigg\{R(0)+\sum_{s=1}^{n-1}R(s)\cos(s\omega)\Bigg\}\hspace{4mm}\omega\in [-\pi,\pi],
\end{align}
where $R(s)=\dfrac{1}{n}\sum_{i=1}^{n-s}(X_i-\Bar{X})(X_{i+s}-\Bar{X}), \; s=0,1,\hdots,(n-1)$ is the sample autocovariance function with sample mean $\bar{X}$. Using \eqref{SPD1}, we have
\begin{align*}
    f_u(0)=\frac{\sigma^2}{2\pi}\frac{(1-\theta_1-\hdots-\theta_q)^2}{(1-\phi_1-\hdots-\phi_p)^2}.
\end{align*}

\noindent Assuming $\log(I(\omega_j)/f(\omega_j))$ to be the error term for the non-linear equation given in \eqref{ref1} and minimizing the sum of squared errors for the equation, the parameters $d$ and $\lambda$ are estimated. Also choosing the $\omega_j$ near $0$, the term $\log(f_u(\omega_j)/f_u(0))$ will be negligible. So another way for the estimation is to choose the upper limit of $j$ such that the $\omega_j$ is small or near zero. Considering $\omega_j$ close to 0, the equation \eqref{ref1} can be rewritten as

\begin{align}\label{ref2}
 \log\{I(\omega_j)\}&=\log\{f_u(0)\}-d \log\{(1-2e^{-\lambda}\cos(\omega)+e^{-2\lambda})\}+\log\Bigg\{\dfrac{I(\omega_j)}{f(\omega_j)}\Bigg\}.
\end{align}

\noindent The above equation can be expressed in form of a nonlinear regression equation where $\log(I(\omega_j)/f(\omega_j))$ can be expressed as error term $\log\{f_u(0)\}$ can be expressed as intercept and the parameters $d$ and $\lambda$ can be estimated by applying the nonlinear least square regression or minimizing the sum of squared errors using generalized simulated annealing with ``GenSa'' package in R.

\subsubsection{Simulation study}
A simulation study is carried out to assess the performance of the stated estimation approach. Assuming the innovation distribution to be Gaussian with mean $\mu=0$ and variance $\sigma^2=2$, a sample with size 1000 is simulated for an ARTFIMA$(p, d, \lambda, q)$ process. The simulations are carried out using the artfima package available in R. We run two distinct simulations using different parameter combinations to test the performance of the specified estimation approach. The results are summarized in table \ref{table:1}.

\begin{table}[ht!]
\begin{center}
\begin{tabular}{||c||c||c||c||}
\hline 
& Actual  & Estimated  \\ 
\hline 
Case 1  & $d$ = 0.4, $\lambda$ = 0.2 & $\hat{d}$ = 0.41, $\hat{\lambda}$ = 0.18 \\ 
\hline 
Case 2  & $d$ = 0.5, $\lambda$ = 0.4 & $\hat{d}$ = 0.52, $\hat{\lambda}$ = 0.39 \\
\hline
\end{tabular}
\caption{Estimated values for differencing parameter $d$ and tempering parameter $\lambda$.}\label{table:1}
\end{center}
\end{table}
 
 \noindent Furthermore, using the ARTFIMA$(1, 0.4, 0.2, 1)$ process, we simulate 1000 samples each with size 1000 and construct box-plots of estimated parameters using simulated data. The plot is shown in left panel of Fig \ref{fig:1}. From Fig \ref{fig:1}, it is evident that the median of the estimated parameters is equal to the true values with some outliers also. Similarly, using the ARTFIMA$(1, 0.5, 0.4, 1)$ an another simulation is performed and the box-plot for the same is given in right panel of Fig \ref{fig:1}. The performance of the estimation method also depends on the optimization method used in minimizing the errors in the non-linear optimization and the initial guess used for the parameters. 

\begin{figure}%
    \centering
    \subfloat[\label{fig:1a}]{{\includegraphics[width=8cm]{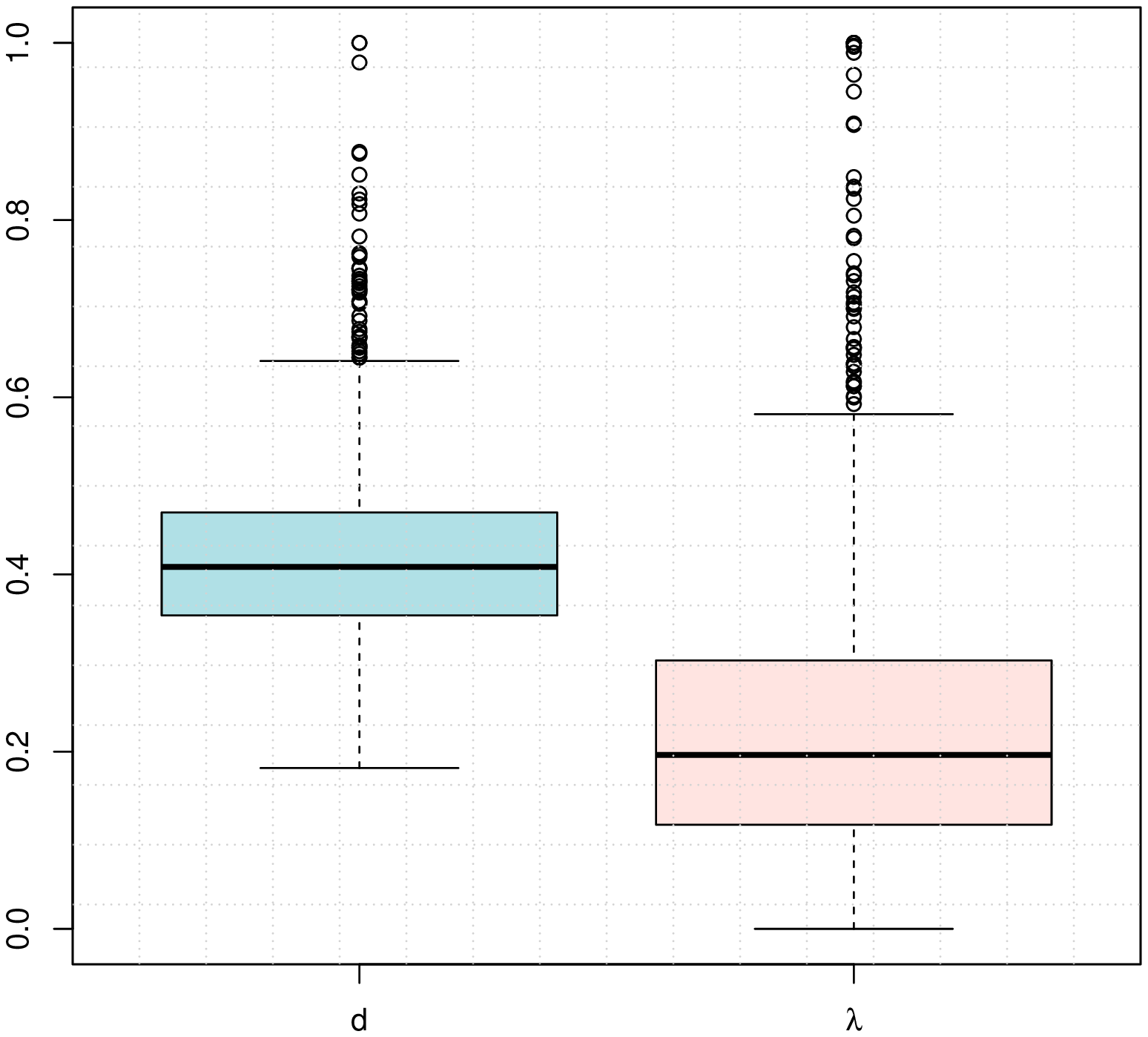} }}%
    \qquad
    \subfloat[\label{fig:1b}]{{\includegraphics[width=8cm]{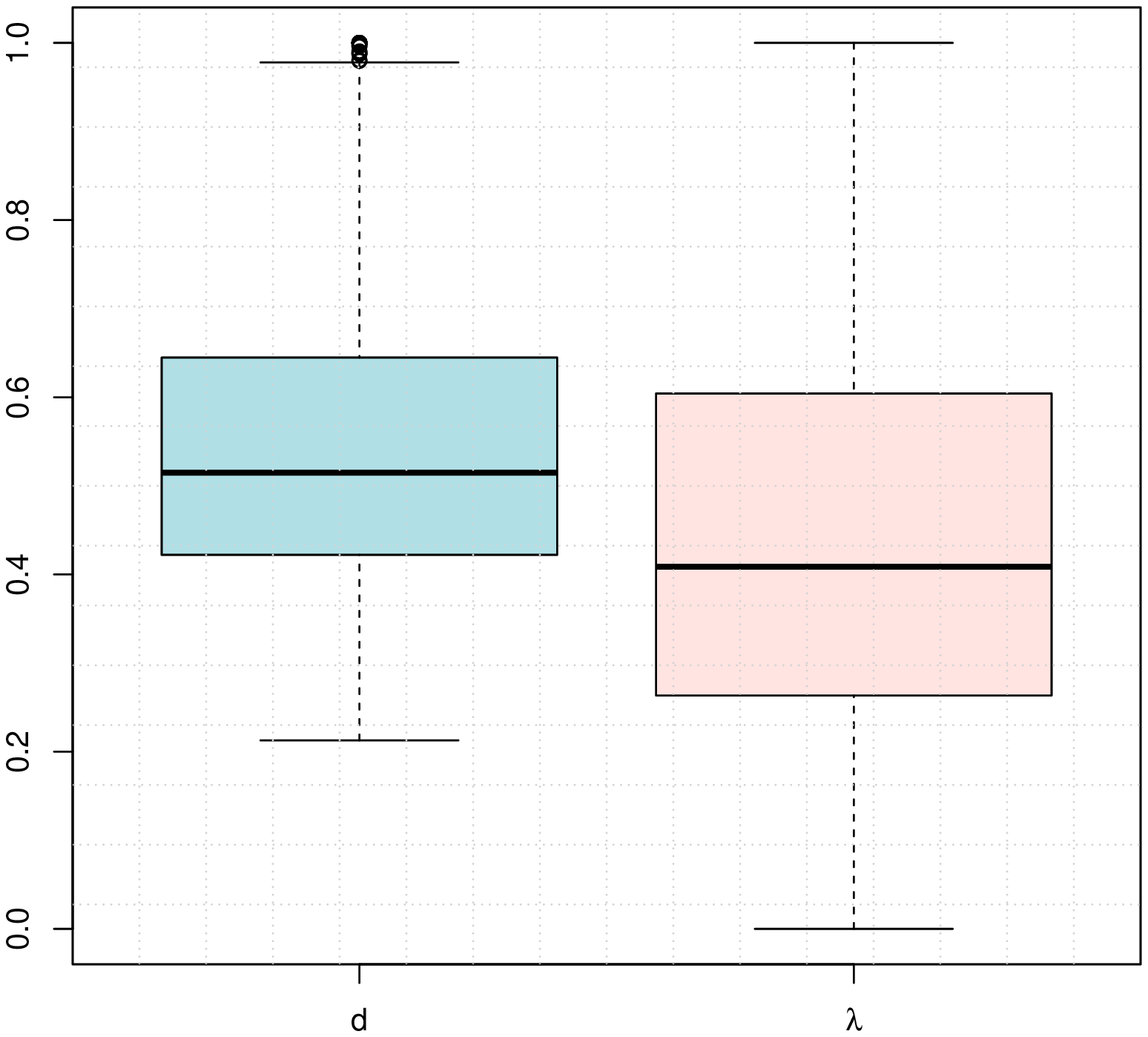} }}%
    \caption{Box plot of parameters using 1000 samples for $d = 0.4$ and $\lambda = 0.2$ (Fig \ref{fig:1a}) and for $d = 0.5$ and $\lambda = 0.4$ (Fig \ref{fig:1b}).}%
    \label{fig:1}%
\end{figure}

\subsection{Gegenbaur process}
The Gegenbauer polynomials are generalizations of the Legendre polynomials. For $|u|\le 1$ the Gegenbauer polynomials $C_n^d(u)$ are defined in terms of generating function as follows 
\begin{align}
(1-2uZ+Z^2)^{-d}=\sum_{n=0}^{\infty}C_n^d(u)Z^n,
\end{align}
where $d\neq0$, $|Z|<1$ and $C_n^d(u)$ is given by
\begin{equation}\label{Gegenbaur_coefficients}
    C_n^d(u)=\sum_{k=0}^{n/2}(-1)^k\dfrac{\Gamma(n-k+d)}{\Gamma(d)\Gamma(n+1)\Gamma(n-2k+1)}(2u)^{n-2k}.
\end{equation}
The concept of Gegenbauer process was developed by Andel (1986) \cite{Andel1986}. The Gegenbauer process is defined using the generating function of Gegenbauer polynomials. For a general linear process $\{X_t\}$ the Gegenbauer process is given by

\begin{align}\label{GP}
X_t=\sum_{n=0}^{\infty}C_n^d(u)\epsilon_{t-n},
\end{align}
where $\epsilon_t$ is white noise with mean $0$ and variance $\sigma^2$. In terms of backward shift operator $B$, \eqref{GP} can be written as
\begin{align*}
X_t=(1-2uB+B^2)^{-d}\epsilon_t.
\end{align*}

\begin{remark}
The Gegenbauer process is stationary for $|u|<1$ and $d<1/2$ or $|u| = \pm 1$ and $d<1/4$.
\end{remark}

\noindent The spectral density or the power spectrum of Gegenbauer process is given by \cite{Gray1989}
\begin{align*}
f_x(\omega)=\frac{\sigma^2}{\pi}\{4(\cos(\omega)-u)^2\}^{-d},
\end{align*}

\noindent and the autocovariance function of Gegenbauer process takes the following form
\begin{align*}
\gamma(h)=\frac{2^{1-2d}\sigma^2}{\pi}\sin^{-2d}(\omega_0)\sin(d\pi)\Gamma(1-2d)\cos(h\omega_0)\frac{\Gamma(h+2d)}{\Gamma(h+1)}.
\end{align*}

\noindent An extension to Gegenbauer process is known as Gegenbauer autoregressive integrated moving average (GARMA) process. GARMA process is a class of stationary long memory process which generalize the ARIMA and ARFIMA processes. For usefulness of generalized fractionally differenced Gegenbauer processes in time series modelling see \cite{Dissanayake2018} and for different estimation methods related to GARMA process see the recent article \cite{Hunt2022}. The process is defined as follows
\begin{align*}
    \Phi(B)(1-2uB+B^2)^d X_t=\epsilon_t \Theta(B),
\end{align*}
where $\epsilon_t$ is Gaussian white noise with variance $\sigma^2$, $B$ is the lag operator, $|u| \leq 1$ and $|d| <1/2$. Again, $\Phi(B)$ and $\Theta(B)$ are ARMA operators defined in \eqref{ARMA_operators}.

\section{Gegenbauer ARTFIMA process}\label{section3}
In this section, we introduce a new stochastic process namely Gegenbauer autoregressive tempered fractionally integrated moving average (GARTFIMA) process which is defined as follows
\begin{equation}\label{eq1_}
\Phi(B) (1-2ue^{-\lambda} B+e^{-2\lambda} B^2)^d X_t= \Theta(B) \epsilon_t,
\end{equation}
where $\epsilon_t$ is Gaussian white noise with variance $\sigma^2$, $B$ is the lag operator, $|u| < 1$, $\lambda\ge0$, $|d| <1/2$ and $\Phi(B)$, $\Theta(B)$ are ARMA operators defined in \eqref{ARMA_operators}. This process generalizes the ARIMA, ARFIMA, ARTFIMA and GARMA processes. In short this process is denoted by GARTFIMA$(p, d, \lambda, u, q)$.

\begin{proof}
Using \eqref{eq1_} it follows
\begin{align*}
X_t&= \Bigg(\frac{\Theta(B)}{\Phi(B)}\Bigg)(1-2ue^{-\lambda }B+e^{-2\lambda} B^2)^{-d} \epsilon_t,
\end{align*}
where we can write
\begin{align}\label{var}
(1-2u e^{-\lambda }z+e^{-2\lambda} z^2)^{-d}= \sum_{n=0}^{\infty}C_n^d(u)(e^{-\lambda}z)^n\;{\rm and}\; {\rm Var}(\epsilon_t) = \sigma^2.
\end{align}
For large $n$, the Gegenbauer polynomials $C_n^d(u)$ can be approximated as 
\begin{align*}
C_n^d(u)\sim \dfrac{\cos\left((n+d)\phi-d\pi/2\right)n^{d-1}}{\Gamma(d)\sin^d(\phi)}.
\end{align*}
Here $\phi=\cos^{-1}(u)$ and $\frac{\Theta(B)}{\Phi(B)}=\Psi(B)=\sum_{j=0}^{\infty}\psi_j B^j$. The variance of the process is given by
\begin{align*}
\mathrm{Var}(X_t)=\sigma^2\sum_{j=0}^{\infty}\psi_j^2\sum_{n=0}^{\infty}(C_n^d(u))^2 e^{-2\lambda n}
\end{align*}
and the variance will converge for $d<1/2$ and $\lambda\ge 0$ when $|u|<1$. To prove the invertibility condition we define the process \eqref{eq1_} as
\begin{align*}
\epsilon_t=\pi(B)X_t,
\end{align*}
where $\pi(z)=(1-2ue^{-\lambda}z+e^{-2\lambda}z^2)^d\frac{\Phi(z)}{\Theta(z)}$ and again using the same argument discussed above the $\pi(z)$ will converge for $d>-1/2$ and $\lambda\ge0$ when $|u|<1$. 
\end{proof}

\begin{theorem}
For a GARTFIMA$(p, d, \lambda, u, q)$ process defined in \eqref{eq1_}, the spectral density takes the following form
\begin{align*}
f_x(\omega)&=\dfrac{\sigma^2}{2\pi}\sum_{l=-q}^{q}\sum_{j=1}^{p}\psi(l)z^l\zeta_j\Bigg[\dfrac{\rho_j}{1-\rho_je^{\iota\omega}}-\dfrac{1}{1-\rho^{-1}e^{\iota\omega}}\Bigg](A-B \cos(\omega)+C \cos^2(\omega))^{-d},
\end{align*}
where $A=(1+4u^2e^{-2\lambda}-2e^{-2\lambda}+e^{-4\lambda}), B=4ue^{-\lambda}(1+e^{-2\lambda})$, $C= 4e^{-2\lambda},$
\[\psi(l)= \sum_{s=\min[0,l]}^{\max[q,q+l]}\theta_s\theta_{s-l}\]
and \[\zeta_j=\dfrac{\sigma^2}{2\pi}\Bigg[\rho_j\prod_{i=1}^{p}(1-\rho_i\rho_j)\prod(\rho_j-\rho_m)\Bigg]^{-1}.\]
\end{theorem}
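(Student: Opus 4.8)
The plan is to start from the causal moving-average representation established in the preceding argument, $X_t = \Psi(B)\epsilon_t$ with $\Psi(z) = \frac{\Theta(z)}{\Phi(z)}(1 - 2ue^{-\lambda}z + e^{-2\lambda}z^2)^{-d}$, and invoke the standard fact that a causal linear process driven by white noise of variance $\sigma^2$ has spectral density $f_x(\omega) = \frac{\sigma^2}{2\pi}|\Psi(e^{-\iota\omega})|^2$. Since $|\Psi(e^{-\iota\omega})|^2$ is a product of three independent pieces, the computation splits into (i) the tempered-Gegenbauer factor $|1 - 2ue^{-\lambda}e^{-\iota\omega} + e^{-2\lambda}e^{-2\iota\omega}|^{-2d}$, (ii) the moving-average factor $|\Theta(e^{-\iota\omega})|^2$, and (iii) the autoregressive factor $|\Phi(e^{-\iota\omega})|^{-2}$, each of which I would evaluate separately and then recombine.

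For the tempered-Gegenbauer factor I would set $z = e^{-\iota\omega}$ and expand $g(z)\,g(z^{-1})$, where $g(z) = 1 - 2ue^{-\lambda}z + e^{-2\lambda}z^2$ has real coefficients so that $\overline{g(e^{-\iota\omega})} = g(e^{\iota\omega})$. Collecting the resulting powers $e^{\pm\iota\omega}$ and $e^{\pm2\iota\omega}$ into $\cos\omega$ and $\cos 2\omega$, and then substituting $\cos 2\omega = 2\cos^2\omega - 1$, rewrites the product as $A - B\cos\omega + C\cos^2\omega$; reading off the constant term, the $\cos\omega$ coefficient and the $\cos^2\omega$ coefficient yields exactly $A = 1 + 4u^2 e^{-2\lambda} - 2e^{-2\lambda} + e^{-4\lambda}$, $B = 4ue^{-\lambda}(1 + e^{-2\lambda})$, and $C = 4e^{-2\lambda}$. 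This piece is routine algebra. For the moving-average factor I would write $|\Theta(e^{-\iota\omega})|^2 = \Theta(z)\Theta(z^{-1})$ with $\theta_0 = 1$, and expand it as a Laurent polynomial $\sum_{l=-q}^{q}\psi(l)z^l$, whose coefficient of $z^l$ is the autocovariance-type sum $\psi(l) = \sum_s \theta_s\theta_{s-l}$ over the indicated index range.

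The crux of the proof is the autoregressive factor. Assuming distinct roots, I would factor $\Phi(z) = \prod_{j=1}^p(1 - \rho_j z)$, where stationarity forces $|\rho_j| < 1$, and write $|\Phi(e^{-\iota\omega})|^{-2} = 1/(\Phi(z)\Phi(z^{-1}))$ with $z = e^{-\iota\omega}$. Viewed as a rational function of $w = e^{\iota\omega}$, this has $2p$ simple poles: the $p$ points $\rho_j$ lying inside the unit circle and the $p$ reciprocal points $\rho_j^{-1}$ lying outside. Performing the partial-fraction decomposition and pairing each inside pole with its reciprocal outside pole produces, for each $j$, the symmetric bracket $\frac{\rho_j}{1 - \rho_j e^{\iota\omega}} - \frac{1}{1 - \rho_j^{-1}e^{\iota\omega}}$, while the residue at $w = \rho_j$ supplies the weight $\zeta_j$ in the stated form, the factors $\prod_{i=1}^p(1 - \rho_i\rho_j)$ and $\prod_{m\neq j}(\rho_j - \rho_m)$ arising from the two product blocks of the denominator evaluated at the pole. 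Multiplying the three pieces, namely the partial-fraction sum, the Laurent expansion $\sum_{l}\psi(l)z^l$, and $(A - B\cos\omega + C\cos^2\omega)^{-d}$, and attaching the $\frac{\sigma^2}{2\pi}$ prefactor, then assembles the claimed formula.

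I expect the main obstacle to be the partial-fraction and residue bookkeeping for $|\Phi(e^{-\iota\omega})|^{-2}$: one must correctly separate the poles inside and outside the unit circle, verify that the reciprocal pairing collapses to the two-term bracket, and evaluate the residues so that they reduce to $\zeta_j$. The degenerate case of repeated autoregressive roots, where the simple partial-fraction ansatz fails, would require a separate limiting argument, and one should keep careful track of the overall normalization constant as the factors are combined.
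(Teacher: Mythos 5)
Your proposal is correct and takes essentially the same route as the paper: both start from the linear-process representation $X_t=\Psi(B)\epsilon_t$, obtain $f_x(\omega)=\frac{\sigma^2}{2\pi}\frac{|\Theta(z)|^2}{|\Phi(z)|^2}\left|1-2ue^{-(\lambda+\iota\omega)}+e^{-2(\lambda+\iota\omega)}\right|^{-2d}$ with $z=e^{-\iota\omega}$, factor $\Phi(z)=\prod_{j=1}^{p}(1-\rho_j z)$, and reduce the tempered-Gegenbauer factor to $(A-B\cos\omega+C\cos^2\omega)^{-d}$. The only difference is that the partial-fraction/residue decomposition of $|\Theta(z)|^2/|\Phi(z)|^2$ into $\sum_{l=-q}^{q}\sum_{j=1}^{p}\psi(l)z^l\zeta_j\bigl[\cdot\bigr]$, which you propose to derive directly (including the distinct-roots caveat), is simply cited from Sowell (1992) in the paper, so your plan is a self-contained version of the same argument.
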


\begin{proof}
\noindent Rewrite \eqref{eq1_} as follows
\begin{equation*}
X_t=\Psi(B)\epsilon_t,
\end{equation*}
where $\Psi(B)= \dfrac{\Theta(B)}{\Phi(B)} \Delta^{d,\lambda}$ and $\Delta^{d,\lambda}=(1-2ue^{-\lambda}B+B^2)^{-d}$. Then using the definition of spectral density of linear process, we have
\begin{align*}
f_x(\omega)=|\Psi(z)|^2f_\epsilon(\omega),
\end{align*}
where $z=e^{-\iota \omega}$ and $f_\epsilon(\omega)$ is spectral density of the innovation term. The spectral density of the innovation process $\epsilon_t$ is given by $\sigma^2/2\pi$, which implies
\begin{align}\label{SD}
f_x(\omega)=\dfrac{\sigma^2}{2\pi}|\Psi(z)|^2
=\dfrac{\sigma^2}{2\pi}\dfrac{|\Theta(z)|^2}{|\Phi(z)|^2}\left|(1-2ue^{-(\lambda+\iota\omega)}+e^{-2(\lambda+\iota\omega)})\right|^{-2d}.
\end{align}

\noindent Note that $\Phi(x)$ can be written as
\begin{align}\label{phi(x)}
\Phi(x)=\prod_{j=1}^{p}(1-\rho_jx),
\end{align}
where $\rho_1, \rho_2, \hdots, \rho_p$ are complex numbers such that $|\rho_j|<1$ for $j=1, 2, \hdots, p$. Using \eqref{phi(x)} in \eqref{SD}, it follows
\begin{align*}
f_x(\omega)&=\dfrac{\sigma^2|\Theta(e^{-\iota\omega})|^2 \left|(1-2ue^{-(\lambda+\iota\omega)}+e^{-2(\lambda+\iota\omega)})\right|^{-2d}}{2\pi \prod_{j=1}^{p}(1-\rho_jz)(1-\rho_jz^{-1})}.
\end{align*}
Using the results from \cite{Sowell1992}, the spectral density is
\begin{align*}
f_x(\omega)&=\dfrac{\sigma^2}{2\pi}\sum_{l=-q}^{q}\sum_{j=1}^{p}\psi(l)z^l\zeta_j\Bigg[\dfrac{\rho_j}{1-\rho_je^{\iota\omega}}-\dfrac{1}{1-\rho^{-1}e^{\iota\omega}}\Bigg]\left|(1-2ue^{-(\lambda+\iota\omega)}+e^{-2(\lambda+\iota\omega)})\right|^{-2d},
\end{align*}

\noindent where \[\psi(l)= \sum_{s=\min[0,l]}^{\max[q,q+l]}\theta_s\theta_{s-l}\] and \[\zeta_j=\dfrac{\sigma^2}{2\pi}\Bigg[\rho_j\prod_{i=1}^{p}(1-\rho_i\rho_j)\prod(\rho_j-\rho_m)\Bigg]^{-1}.\]

\noindent The spectral density takes the following form
\begin{align*}
f_x(\omega)&=\dfrac{\sigma^2}{2\pi}\sum_{l=-q}^{q}\sum_{j=1}^{p}\psi(l)z^l\zeta_j\Bigg[\dfrac{\rho_j}{1-\rho_je^{\iota\omega}}-\dfrac{1}{1-\rho^{-1}e^{\iota\omega}}\Bigg](A-B \cos(\omega)+C \cos^2(\omega))^d,
\end{align*}
where  $A=(1+4u^2e^{-2\lambda}-2e^{-2\lambda}+e^{-4\lambda}),\; B=4ue^{-\lambda}(1+e^{-2\lambda})$ and $C= 4e^{-2\lambda}.$
\end{proof}

\begin{theorem}
For the GARTFIMA$(p, d, \lambda, u, q)$ process defined in \eqref{eq1_}, the autocovariance function is given by
\begin{align*}
\gamma(h)=\sum_{l=-q}^{q}\sum_{j=1}^{p}\psi(l)\zeta_j\Bigg[\rho^{2p} \sum_{m=0}^{\infty}\rho^m \gamma_w(h-m) d\omega+\sum_{n=1}^{\infty}\rho^n \gamma_w(h+n) \Bigg],
\end{align*}
where $\gamma_w(h-m)=\sigma^2\sum_{n=0}^{\infty}C_n^d(u)C_{n+h}^d(t) e^{-(2n+(h-m)\lambda)}$ and $C_n^d(u)$ are defined in \eqref{Gegenbaur_coefficients}.
\end{theorem}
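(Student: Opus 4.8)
The plan is to compute the autocovariance directly from the spectral density obtained in the previous theorem by means of the inversion formula \eqref{1}, namely $\gamma(h)=\int_{-\pi}^{\pi}f_x(\omega)e^{\iota\omega h}\,d\omega$. Substituting the partial--fraction form of $f_x(\omega)$ derived there, the integrand factors into three pieces: the prefactor $\sum_{l}\sum_{j}\psi(l)z^{l}\zeta_j$ together with the bracket $\big[\frac{\rho_j}{1-\rho_j e^{\iota\omega}}-\frac{1}{1-\rho_j^{-1}e^{\iota\omega}}\big]$ coming from the ARMA part (via the Sowell decomposition), and the tempered Gegenbauer factor $|1-2ue^{-(\lambda+\iota\omega)}+e^{-2(\lambda+\iota\omega)}|^{-2d}$. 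The strategy is to expand each piece as an absolutely convergent series in powers of $e^{\iota\omega}$, multiply, and integrate term by term using the orthogonality relation $\frac{1}{2\pi}\int_{-\pi}^{\pi}e^{\iota\omega k}\,d\omega=\delta_{k,0}$.

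First I would treat the Gegenbauer factor. Writing $G(z)=1-2ue^{-\lambda}z+e^{-2\lambda}z^{2}$ with $z=e^{-\iota\omega}$, the generating function \eqref{var} gives $G(z)^{-d}=\sum_{n\ge0}C_n^{d}(u)e^{-\lambda n}e^{-\iota\omega n}$, whence $|G(z)|^{-2d}=\sum_{n\ge0}\sum_{n'\ge0}C_n^{d}(u)C_{n'}^{d}(u)e^{-\lambda(n+n')}e^{-\iota\omega(n-n')}$. The purpose of this double series is that, after integration against a shifted exponential $e^{\iota\omega(h\mp m)}$, the orthogonality relation forces $n'-n=h\mp m$, collapsing one of the two sums and reproducing exactly the kernel $\gamma_w(k)=\sigma^{2}\sum_{n\ge0}C_n^{d}(u)C_{n+k}^{d}(u)e^{-\lambda(2n+k)}$, i.e.\ the autocovariance of the tempered Gegenbauer--filtered white noise $W_t=\sum_{n\ge0}C_n^{d}(u)e^{-\lambda n}\epsilon_{t-n}$ read off from \eqref{var}.

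Next I would expand the two bracket terms as geometric series in the directions dictated by $|\rho_j|<1$: $\frac{\rho_j}{1-\rho_j e^{\iota\omega}}=\sum_{m\ge0}\rho_j^{\,m+1}e^{\iota\omega m}$ contributing positive shifts, and $-\frac{1}{1-\rho_j^{-1}e^{\iota\omega}}=\sum_{n\ge1}\rho_j^{\,n}e^{-\iota\omega n}$ contributing negative shifts. Multiplying these against the Gegenbauer double series and against $z^{l}=e^{-\iota\omega l}$, collecting the total exponent of $e^{\iota\omega}$, and applying orthogonality term by term eliminates the integral: the first geometric series selects the combination producing $\sum_{m\ge0}\rho_j^{m}\gamma_w(h-m)$ and the second produces $\sum_{n\ge1}\rho_j^{n}\gamma_w(h+n)$, while the surviving constants reassemble into $\psi(l)\zeta_j$; the residual power of $\rho_j$ (written $\rho^{2p}$) is what is left over by the reindexing of the first geometric series inside Sowell's partial--fraction normalisation. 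Summing over $l$ and $j$ then yields the stated formula.

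The interchange of the triple summation with the integral is justified by absolute convergence: under the standing hypotheses $|d|<1/2$, $|u|<1$, $\lambda\ge0$ the Gegenbauer coefficients satisfy $C_n^{d}(u)=O(n^{d-1})$ (the asymptotics already used in the stationarity proof), so the tempering factor $e^{-\lambda n}$ together with $|\rho_j|<1$ makes every series geometrically, hence absolutely, summable on $[-\pi,\pi]$. The main obstacle is the index bookkeeping in this collapsing step: one must match the Gegenbauer shift $n'-n$ simultaneously against the ARMA shift ($m$ or $n$), the moving-average offset $l$, and the target lag $h$, so that the Kronecker delta delivers precisely $\gamma_w(h-m)$ in the first sum and $\gamma_w(h+n)$ in the second, with the correct residual power of $\rho_j$ retained. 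Everything past that is routine term-by-term integration.
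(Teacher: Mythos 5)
Your proposal is correct and follows essentially the same route as the paper's proof: Fourier inversion of the Sowell partial--fraction form of $f_x(\omega)$, geometric expansion of the two bracket terms, expansion of the tempered Gegenbauer factor via its generating function, and collapse by orthogonality of $e^{\iota\omega k}$ to produce the kernel $\gamma_w(\cdot)$ at shifted lags. The only (immaterial) difference is organizational --- the paper first reduces the integral to $\gamma_w(h-m)$ and $\gamma_w(h+n)$ and then computes $\gamma_w(k)$ separately by cosine orthogonality, whereas you merge both steps into a single term-by-term integration, and you additionally justify the interchange of sums and integral, which the paper leaves implicit.
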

\begin{proof}

\noindent The autocovariance function can be calculated by taking the inverse Fourier transform of spectral density of the process $X_t$ using the following form

\begin{align*}
\gamma(h)&= \int_{-\pi}^{\pi}f_x(\omega) e^{\iota\omega h} d\omega\\
&=\dfrac{\sigma^2}{2\pi}\int_{-\pi}^{\pi}\Bigg[\sum_{l=-q}^{q}\sum_{j=1}^{p}\psi(l)\zeta_j\Bigg(\dfrac{\rho_j}{1-\rho_je^{\iota\omega}}-\dfrac{1}{1-\rho^{-1}e^{\iota\omega}}\Bigg)\left|(1-2ue^{-(\lambda+\iota\omega)}+e^{-2(\lambda+\iota\omega)})\right|^{-2d}z^{p+l}\Bigg] e^{\iota\omega h} d\omega.
\end{align*}

\noindent We can write another form using following expansions
\begin{align*}
\dfrac{\rho}{1-\rho e^{\iota\omega}}&=\rho^{2p}\sum_{m=0}^{\infty}(\rho e^{-\iota\omega})^m,\\
\dfrac{-1}{1-\rho^{-1}e^{\iota\omega}}&=-1+\sum_{n=0}^{\infty}(\rho e^{-\iota\omega})^n=\sum_{n=1}^{\infty}(\rho e^{\iota\omega})^n.
\end{align*}

\noindent The spectral density of GARTFIMA$(0, d, \lambda, u, 0)$ process $W_t=(1-2ue^{-\lambda}B+e^{-2\lambda}B^2)^{-d}\epsilon_t$ is given by

\begin{align*}
f_w(\omega)=\dfrac{\sigma^2}{2\pi}\left|(1-2ue^{-(\lambda+\iota\omega)}+e^{-2(\lambda+\iota\omega)})\right|^{-2d}.
\end{align*}

\noindent We can write 
\begin{align}\label{acov1}
\gamma(h)&=\dfrac{\sigma^2}{2\pi}\int_{-\pi}^{\pi}\Bigg[\sum_{l=-q}^{q}\sum_{j=1}^{p}\psi(l)\zeta_j\Bigg(\rho^{2p}\sum_{m=0}^{\infty}(\rho e^{-\iota\omega})^m+\sum_{n=1}^{\infty}(\rho e^{\iota\omega})^n\Bigg)\left|(1-2ue^{-(\lambda+\iota\omega)}+e^{-2(\lambda+\iota\omega)})\right|^{-2d}z^{p+l}\Bigg] e^{\iota\omega h} d\omega\nonumber\\
&=\dfrac{\sigma^2}{2\pi}\sum_{l=-q}^{q}\sum_{j=1}^{p}\psi(l)\zeta_j\Bigg[ \int_{-\pi}^{\pi} \rho^{2p}\sum_{m=0}^{\infty}(\rho e^{-\iota\omega})^m\left|(1-2ue^{-(\lambda+\iota\omega)}+e^{-2(\lambda+\iota\omega)})\right|^{-2d}e^{\iota\omega h} d\omega\nonumber\\ 
&\hspace{1cm}+\int_{-\pi}^{\pi}\sum_{n=1}^{\infty}(\rho e^{\iota\omega})^n\left|(1-2ue^{-(\lambda+\iota\omega)}+e^{-2(\lambda+\iota\omega)})\right|^{-2d} e^{\iota\omega h}d\omega\Bigg]\nonumber\\
&=\sum_{l=-q}^{q}\sum_{j=1}^{p}\psi(l)\zeta_j\Bigg[\rho^{2p} \sum_{m=0}^{\infty}\rho^m \int_{-\pi}^{\pi} \dfrac{2\pi}{\sigma^2} f_w(\omega)e^{\iota(h-m)} d\omega+\sum_{n=1}^{\infty}\rho^n\int_{-\pi}^{\pi} \dfrac{2\pi}{\sigma^2} f_w(\omega)e^{\iota(h+n)}d\omega \Bigg]\nonumber\\
&=\sum_{l=-q}^{q}\sum_{j=1}^{p}\psi(l)\zeta_j\Bigg[\rho^{2p} \sum_{m=0}^{\infty}\rho^m \gamma_w(h-m) d\omega+\sum_{n=1}^{\infty}\rho^n \gamma_w(h+n) \Bigg].
\end{align}

\noindent The autocovariance function for GARTFIMA $(0, d, \lambda, u, 0)$ process $W_t=(1-2ue^{-\lambda}B+e^{-2\lambda}B^2)\epsilon_t$ is 
\begin{align}\label{eq1}
\gamma_w(k)&=\int_{-\pi}^{\pi} \cos(k\omega)f_w(\omega) d\omega
=\dfrac{\sigma^2}{2\pi}\int_{-\pi}^{\pi} \cos(k\omega)\left|(1-2ue^{-(\lambda+\iota\omega)}+e^{-2(\lambda+\iota\omega)})\right|^{-2d} d\omega\nonumber\\
&=\dfrac{\sigma^2}{2\pi}\int_{-\pi}^{\pi}\cos(h\omega)\left|\sum_{n=0}^{\infty}C_n^d(u)(e^{-\lambda-\iota\omega})^n\right|^2 d\omega.
\end{align}
For ease of notation, let $C_n^d(u)=a_n$. Then

\begin{align}\label{eq2}
\left|\sum_{n=0}^{\infty}C_n^d(u)(e^{-\lambda-\iota\omega})^n\right|^2&=(a_0+a_1e^{-\lambda-\iota\omega}+a_2e^{-2\lambda-2\iota\omega}+\hdots)(a_0+a_1e^{-\lambda+\iota\omega}+a_2e^{-2\lambda+2\iota\omega}+\hdots)\nonumber\\
&=\sum_{n=0}^{\infty}a_n^2e^{-2n\lambda}+2\sum_{n=0}^{\infty}a_na_{n+1}e^{-(2n+1)\lambda}\cos(\omega)+2\sum_{n=0}^{\infty}a_n a_{n+2}e^{-(2n+2)\lambda}\cos(2\omega)+\hdots\nonumber\\
&=\sum_{n=0}^{\infty}a_n^2 e^{-2n\lambda}+2\sum_{r=1}^{\infty}\sum_{n=0}^{\infty}a_n a_{n+r}e^{-(2n+r)\lambda}\cos(\omega r).
\end{align}
\noindent Using \eqref{eq2} and \eqref{eq1}, it follows

\begin{align}\label{acov}
\gamma_w(k)&=\dfrac{\sigma^2}{\pi}\int_{-\pi}^{\pi}\cos(k\omega)\sum_{r=1}^{\infty}\sum_{n=0}^{\infty}a_n a_{n+r}e^{-(2n+r)\lambda}\cos(\omega r) d\omega\nonumber\\ 
&=\dfrac{\sigma^2}{\pi}\sum_{r=1}^{\infty}\sum_{n=0}^{\infty}a_n a_{n+r}e^{-(2n+r)\lambda}\int_{-\pi}^{\pi}\cos(k\omega)\cos(\omega r) d\omega
=\sigma^2\sum_{n=0}^{\infty}a_n a_{n+h}e^{-(2n+k)\lambda}.
\end{align}
Finally, taking $k=h-m$ and $k=h+n$ in \eqref{acov} and putting the values of $\gamma_w(h-m)$ and $\gamma_w(h+n)$ in \eqref{acov1}, one gets the desired result.
\end{proof}

\begin{proposition}
For $u=1$, the GARTFIMA$(p, d, \lambda, u, q)$ process reduced to ARTFIMA$(p, 2d, \lambda, q)$ process.
\end{proposition}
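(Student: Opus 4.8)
The plan is to exploit the fact that at $u=1$ the tempered Gegenbauer operator collapses into a perfect square. First I would substitute $u=1$ into the quadratic appearing in \eqref{eq1_} and record the elementary factorization
\begin{align*}
1-2e^{-\lambda}B+e^{-2\lambda}B^2=(1-e^{-\lambda}B)^2.
\end{align*}
Raising both sides to the power $d$ then gives $(1-2e^{-\lambda}B+e^{-2\lambda}B^2)^d=(1-e^{-\lambda}B)^{2d}$, so that the defining equation \eqref{eq1_} becomes
\begin{align*}
\Phi(B)(1-e^{-\lambda}B)^{2d}X_t=\Theta(B)\epsilon_t.
\end{align*}
Comparing this directly with the ARTFIMA defining equation \eqref{artfima}, in which the tempered fractional operator is $(1-e^{-\lambda}B)^{d}$, one reads off that the resulting process is exactly an ARTFIMA$(p,2d,\lambda,q)$ process, i.e. an ARTFIMA process with the fractional differencing parameter doubled. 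This algebraic identity is the whole content of the reduction.

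As a consistency check I would confirm that the two descriptions of the operator agree at the level of their series expansions. Setting $u=1$ in the Gegenbauer generating function \eqref{var} gives $\sum_{n=0}^{\infty}C_n^d(1)(e^{-\lambda}z)^n$, and this must coincide term by term with the binomial expansion $(1-e^{-\lambda}z)^{-2d}=\sum_{n=0}^{\infty}\binom{n+2d-1}{n}(e^{-\lambda}z)^n$ coming from the ARTFIMA operator with parameter $2d$. This reduces to the classical evaluation $C_n^d(1)=\binom{n+2d-1}{n}$ of the Gegenbauer polynomials at $1$, which can be verified directly from \eqref{Gegenbaur_coefficients}.

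The step requiring the most care is not the algebra but the bookkeeping of the admissible parameter ranges. The point $u=1$ lies on the boundary $|u|=1$, which is excluded from the standing assumption $|u|<1$ for the GARTFIMA process, so the reduction should be understood as the boundary (limiting) case. Here the stationarity condition recorded for the Gegenbauer process at $|u|=1$, namely $d<1/4$, has to be matched against the stationarity requirement for ARTFIMA$(p,2d,\lambda,q)$, which demands $|2d|<1/2$, i.e. $d<1/4$. Verifying that these two conditions coincide is what makes the reduction genuine rather than merely formal, and it is the part I would treat most carefully.
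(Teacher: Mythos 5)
Your opening step is exactly the paper's: its proof also begins by substituting $u=1$ into \eqref{eq1_} and rewriting the model as $\Phi(B)(1-e^{-\lambda}B)^{2d}X_t=\Theta(B)\epsilon_t$, which is the ARTFIMA defining equation \eqref{artfima} with $d$ replaced by $2d$. Where you part ways is in what comes after. You treat the operator identity as the whole content of the reduction and add a term-by-term consistency check of the expansions, resting on the classical evaluation $C_n^d(1)=\binom{n+2d-1}{n}$. The paper instead spends the bulk of its proof confirming the reduction at the level of second-order structure: it computes the autocovariance of the $u=1$ process, $\gamma_w(k)=\sigma^2\sum_{n=0}^{\infty}C_n^d(1)C_{n+k}^d(1)e^{-(2n+k)\lambda}$, using the same binomial evaluation, and resums that series into the closed form $\frac{\sigma^2\Gamma(2d+k)}{\Gamma(2d)\Gamma(k+1)}\,{}_2F_1(2d,k+2d;k+1;e^{-2\lambda})$, which is precisely the known ARTFIMA$(0,2d,\lambda,0)$ autocovariance of Sabzikar et al., and then remarks that the general $(p,q)$ autocovariances match as well. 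Your route is shorter and logically sufficient, since both processes are defined by their difference equations, so identifying the operators identifies the processes; the paper's longer route buys an explicit autocovariance identity tying the reduction to established ARTFIMA formulas.

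One correction to your final paragraph: the stationarity bookkeeping is misquoted. The condition $d<1/4$ at $|u|=1$ is the stationarity condition for the \emph{untempered} Gegenbauer process ($\lambda=0$), and the condition $|2d|<1/2$ you invoke is the untempered ARFIMA condition; for $\lambda>0$ no such restriction on $d$ is needed for ARTFIMA, because tempering keeps the spectral density $|1-e^{-(\lambda+\iota\omega)}|^{-2d}$ bounded for all $\omega$. Concretely, at $u=1$ and $\lambda>0$ the variance series $\sum_{n}\left(C_n^d(1)\right)^2e^{-2\lambda n}$ converges for every $d$, since $C_n^d(1)$ grows only polynomially in $n$. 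So the boundary case $u=1$ is unproblematic precisely because of the tempering, not because two fractional-range conditions coincide; the coincidence you propose to verify is between two conditions neither of which governs the tempered case. This does not invalidate your main argument, but as stated that remark is misleading.
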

\begin{proof}
For $u=1$ the process defined in \eqref{eq1_} can be rewritten as
\begin{align*}
    \Phi(B)(1-e^{-\lambda}B)^{2d}X_t=\Theta(B)\epsilon_t.
\end{align*}
For $u=1$ the autocovariance function $\gamma_w(h)$ for GARTFIMA$(0, d, \lambda, u, 0)$ takes the following form
\begin{align*}
    \gamma_w(k)=\sigma^2\sum_{n=0}^{\infty}C_n^d(1)C_{n+h}^d(1)e^{-(2n+k)\lambda},
\end{align*}
where $C_n^d(1)=\binom{2d+n-1}{n} \text{ and } C_n^d(1)=\binom{2d+n+k-1}{n+k}.$ Now

\begin{align}\label{acov2}
    \gamma_w(k)&=\sigma^2(C_0^d(1)C_k^d(1)e^{-k\lambda}+C_1^d(1)C_{k+1}^d(1)e^{-(k+1)\lambda}+C_2^d(1)C_{k+2}^d(1)e^{-(k+2)\lambda}+\hdots)\nonumber\\
    &=\sigma^2\Bigg[\binom{2d+k-1}{k}e^{-k\lambda}+\binom{2d}{1}\binom{2d+k}{k+1}e^{-(k+2)\lambda}+\binom{2d+1}{2}\binom{2d+k+1}{k+2}e^{-(k+4)}\lambda+\hdots\Bigg]\nonumber\\
    &=\sigma^2\bigg[\frac{(2d+k-1)!e^{-k\lambda}}{(2d-1)!k!}+\frac{2d (2d+k)!e^{-(k+2)\lambda}}{(k+1)!(2d-1)!}+\frac{(2d+1)!(2d+k+1)!e^{-(k+4)\lambda}}{2!(2d-1)!(k+2)!(2d-1)!}+\hdots\Bigg]\nonumber\\
    &=\sigma^2e^{-k\lambda}\frac{2d(2d+1)\hdots(2d+k-1)}{k!}\Bigg[1+\frac{2d(2d+k)e^{-2\lambda}}{(k+1)}+\frac{2d(2d+1)(k+2d)(k+2d+1)e^{-4\lambda}}{(k+1)(k+2)2!}+\hdots\Bigg]\nonumber\\&=\frac{\sigma^2\Gamma(2d+k)}{\Gamma(2d)\Gamma(k+1)} {}_2F_1(2d,k+2d;k+1;e^{-2\lambda}),
\end{align}

\noindent which is the autocovariance function for ARTFIMA$(0, 2d, \lambda, 0)$ process defined in \cite{Sabzikar2019}. According to Sabzikar et al. the autocovariance function of ARTFIMA$(p,2d,\lambda,q)$ process is given by
\begin{align*}
\gamma(h)=\frac{\sigma^2}{2\pi}\sum_{l=-q}^{q}\sum_{j=1}^{p}\psi(l)\zeta_j\Bigg[\rho^{2p}\sum_{m=0}^{\infty}\rho^m\frac{2\pi}{\sigma^2}\gamma_w(h-m)+\sum_{n=1}^{\infty}\rho^n\frac{2\pi}{\sigma^2}\gamma_w(h+n)\Bigg]
\end{align*}
where $\gamma_w(k)$ for $k=h-m$ and $k=h+n$ is defined in equation \ref{acov2}. Using the $\gamma_w(k)$ from equation \ref{acov2} the autocovariance  of ARTFIMA$(p,2d,\lambda,q)$ takes a similar form to GARTFIMA$(p,d,\lambda,u,q)$ process for u=1. 

\end{proof}

\begin{proposition}
For $\lambda=0,$ the GARTFIMA$(p, d, \lambda, u, q)$ process reduced to GARMA$(p, d, u, q)$ process.
\end{proposition}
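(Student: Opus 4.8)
The plan is to argue directly from the defining equation \eqref{eq1_}, since the reduction is nothing more than evaluating the tempering parameter at its boundary value. First I would set $\lambda=0$ in \eqref{eq1_} and observe that $e^{-\lambda}=1$ and $e^{-2\lambda}=1$, so the tempered Gegenbauer operator collapses,
\[
\bigl(1-2ue^{-\lambda}B+e^{-2\lambda}B^2\bigr)^d \;\longrightarrow\; \bigl(1-2uB+B^2\bigr)^d .
\]
Substituting this back, the GARTFIMA defining equation becomes $\Phi(B)(1-2uB+B^2)^d X_t=\Theta(B)\epsilon_t$, which is exactly the GARMA$(p,d,u,q)$ equation recalled at the end of Section~\ref{section2}. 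That identification of operators is the entire content of the claim at the level of the process definition.

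To make the reduction self-contained I would then check the admissible parameter ranges match: the GARTFIMA constraints $|u|<1$, $\lambda\ge 0$, $|d|<1/2$ specialize at $\lambda=0$ to $|u|<1$, $|d|<1/2$, which sit inside the GARMA range $|u|\le 1$, $|d|<1/2$, so no stationarity or invertibility condition is violated by the specialization. This confirms that the limiting object is a bona fide GARMA process and not merely a formal coincidence of operators.

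As an optional consistency verification I would also specialize the spectral density derived in the preceding theorem. Setting $\lambda=0$ in $A=(1+4u^2e^{-2\lambda}-2e^{-2\lambda}+e^{-4\lambda})$, $B=4ue^{-\lambda}(1+e^{-2\lambda})$, $C=4e^{-2\lambda}$ gives $A=4u^2$, $B=8u$, $C=4$, whence $A-B\cos(\omega)+C\cos^2(\omega)=4(\cos(\omega)-u)^2$. Thus the quadratic-in-$\cos\omega$ factor reduces to $\{4(\cos(\omega)-u)^2\}$, and in the $p=q=0$ case the GARTFIMA spectral density recovers the Gegenbauer power spectrum $\tfrac{\sigma^2}{\pi}\{4(\cos(\omega)-u)^2\}^{-d}$ recorded earlier (up to the stated normalization), in agreement with the reduction.

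There is essentially no analytic obstacle here: the statement is a direct specialization of the defining operator, and the only point requiring a line of algebra is the routine check that $A-B\cos(\omega)+C\cos^2(\omega)$ collapses to $4(\cos(\omega)-u)^2$ when $\lambda=0$. Hence the proof is short, and the care lies only in recording the parameter-range and spectral-density consistency rather than in any substantive computation.
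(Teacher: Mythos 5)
Your proposal is correct, and its core step is identical to the paper's: setting $\lambda=0$ in \eqref{eq1_} so that the tempered Gegenbauer operator collapses to $(1-2uB+B^2)^d$, which is exactly the GARMA$(p,d,u,q)$ defining equation. Where you diverge is in the supplementary verification. The paper works in the time domain: it sets $\lambda=0$ in the autocovariance formula \eqref{acov}, observes that it reduces to the GARMA$(0,d,u,0)$ autocovariance $\gamma_w(k)=\sigma^2\sum_{n=0}^{\infty}C_n^d(u)C_{n+k}^d(u)$ of Woodward et al., and then appeals to \eqref{acov1} to express the general $(p,q)$ autocovariance in terms of $\gamma_w$. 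You instead work in the frequency domain, checking that $A-B\cos(\omega)+C\cos^2(\omega)$ collapses to $4(\cos(\omega)-u)^2$ at $\lambda=0$ (your algebra is right: $A=4u^2$, $B=8u$, $C=4$), so the spectral density recovers the Gegenbauer power spectrum; you also add a parameter-range check that the paper omits. The two verifications are essentially dual to each other under the Fourier transform, and both are valid: yours is algebraically lighter and your remark that the GARTFIMA ranges $|u|<1$, $|d|<1/2$ sit inside the GARMA ranges $|u|\le 1$, $|d|<1/2$ is a point of care the paper skips, while the paper's time-domain route has the advantage of tying the reduction directly to the known GARMA covariance structure in the literature.
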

\begin{proof}
For $\lambda=0$, \eqref{eq1_} can be written as 
\begin{align*}
    \Phi(B)(1-2uB+B^2)^d X_t=\Theta(B)\epsilon_t,
\end{align*}
and for $\lambda=0$ in \eqref{acov} the autocovariance for GARMA$(0, d, u, 0)$ process $W_t$ takes the following form \cite{Woodward1998}
\begin{align*}
    \gamma_w(k)=\sigma^2\sum_{n=0}^{\infty}C_n^d(u)C_{n+k}^d(u).
\end{align*}
With the help of \eqref{acov1}, one can write the autocovariance function of GARMA$(p, d, u, q)$ in terms of autocovariance function of GARMA$(0, d, u, 0)$ denoted by $\gamma_w(h)$.
\end{proof}



\begin{theorem}
For a GARTFIMA$(p,d,\lambda,q)$ process $\sum_{h=0}^{\infty}\gamma(h)<\infty$ for $d<1/4$.
\end{theorem}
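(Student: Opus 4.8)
The plan is to start from the closed form of the autocovariance function $\gamma(h)$ established in the preceding theorem and reduce the whole problem to summability of the base autocovariance $\gamma_w(\cdot)$ of the GARTFIMA$(0,d,\lambda,u,0)$ process $W_t$. Recall that
\[
\gamma(h)=\sum_{l=-q}^{q}\sum_{j=1}^{p}\psi(l)\zeta_j\Big[\rho^{2p}\sum_{m=0}^{\infty}\rho^{m}\gamma_w(h-m)+\sum_{n=1}^{\infty}\rho^{n}\gamma_w(h+n)\Big].
\]
The outer sums over $l$ and $j$ are finite, with fixed coefficients $\psi(l)\zeta_j$, so they cannot affect summability. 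Each inner expression is a discrete convolution of the sequence $(\gamma_w(k))$ with a geometric sequence $(\rho^{m})$; since the autoregressive roots satisfy $|\rho_j|<1$ by the stationarity assumption, those geometric sequences lie in $\ell^1$. Hence, by Young's inequality for sequences ($\ell^1\ast\ell^1\subseteq\ell^1$), it suffices to prove $\sum_{k\in\mathbb{Z}}|\gamma_w(k)|<\infty$; absolute summability of $\gamma$, and in particular finiteness of $\sum_{h\ge 0}\gamma(h)$, then follows.

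The core estimate therefore concerns
\[
\gamma_w(k)=\sigma^{2}\sum_{n=0}^{\infty}C_n^{d}(u)\,C_{n+k}^{d}(u)\,e^{-(2n+k)\lambda}.
\]
I would read this as the autocovariance of the one-sided moving average $W_t=\sum_{n\ge 0}b_n\epsilon_{t-n}$ with tempered coefficients $b_n=C_n^{d}(u)e^{-n\lambda}$, so that $\gamma_w(k)=\sigma^{2}\sum_n b_n b_{n+k}$. A clean sufficient condition for absolute summability of such an autocovariance is $\sum_n|b_n|<\infty$, since then, after interchanging the order of summation (justified by Tonelli's theorem, all terms being nonnegative in modulus),
\[
\sum_{k}|\gamma_w(k)|\le \sigma^{2}\Big(\sum_{n}|b_n|\Big)^{2}<\infty .
\]

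To control $\sum_n|b_n|$ I would invoke the large-$n$ asymptotics of the Gegenbauer coefficients recorded earlier, namely $C_n^{d}(u)=O(n^{d-1})$ for $|u|<1$, together with the tempering factor $e^{-n\lambda}$. The tempering parameter $\lambda$ is the mechanism that removes the long-memory pole at $\omega=0$ present in the untempered GARMA case, so that the spectral density of Theorem~3.1 stays bounded near the origin; for $\lambda>0$ the exponential decay dominates the polynomial growth and $\sum_n|b_n|<\infty$ for every admissible $d$. The role of the bound $d<1/4$ is to keep the second moment $\gamma_w(0)=\sigma^{2}\sum_n b_n^{2}$ finite even in the delicate limiting regime $u\to 1$, where $C_n^{d}(1)\sim n^{2d-1}/\Gamma(2d)$ forces $\{C_n^{d}(1)\}\in\ell^{2}$ precisely when $d<1/4$; imposing it therefore makes all the sums above converge uniformly across the parameter range.

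The main obstacle will be the rigorous control of $\gamma_w$. The Gegenbauer asymptotic carries the oscillatory factor $\cos((n+d)\phi-d\pi/2)$ with $\phi=\cos^{-1}(u)$, so $C_n^{d}(u)$ is not of one sign, and some care is needed when passing to absolute values and interchanging the double summation. If one wants the sharp threshold rather than the crude $\ell^1$ bound above, this oscillation must be exploited, for instance through a Dirichlet-type summation by parts in $k$, or by using the identity $\sum_{h}\gamma(h)=2\pi f_x(0)$ and reading off boundedness of the spectral density at $\omega=0$ directly from Theorem~3.1; both routes are more delicate than the convolution estimate. The remaining interchanges of summation and the uniform convergence of the geometric series in $h$ are routine once $|\rho_j|<1$ and the $\ell^1$ bound on $\gamma_w$ are in hand.
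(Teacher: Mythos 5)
Your route is genuinely different from the paper's, and on the tempered case $\lambda>0$ it is the more solid of the two. The paper performs no reduction of the ARMA part at all: its proof silently replaces $\gamma(h)$ by $\gamma_w(h)=\sigma^2\sum_n C_n^d(u)e^{-n\lambda}C_{n+h}^d(u)e^{-(n+h)\lambda}$, i.e. it works with the $(0,d,\lambda,u,0)$ case; your Young's-inequality step ($\ell^1\ast\ell^1\subseteq\ell^1$, using $|\rho_j|<1$) is exactly the missing justification for that reduction. For the core estimate the paper then goes in the opposite direction from you: it takes absolute values and bounds $|C_n^d(u)|e^{-n\lambda}\le C_n^d(1)$, thereby discarding the tempering factor entirely and dominating by the untempered $u=1$ sequence, and finally invokes $C_n^d(1)=\Gamma(2d+n)/(\Gamma(2d)\Gamma(n+1))\sim n^{2d-1}$ to claim that the double sum $\sum_h\sum_n C_n^d(1)C_{n+h}^d(1)$ converges when $d<1/4$. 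You instead keep the factor $e^{-n\lambda}$, which is the actual mechanism of the model: with $b_n=C_n^d(u)e^{-n\lambda}$ and $|C_n^d(u)|=O(n^{d-1})$ you get $\sum_n|b_n|<\infty$ and hence $\sum_h|\gamma(h)|<\infty$ for every $|d|<1/2$, a conclusion that is in fact stronger than the stated one whenever $\lambda>0$.

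Two further points. First, your instinct that a crude absolute-value bound cannot give the stated threshold is not a stylistic worry; it is precisely where the paper's argument fails. In the paper's dominating sum the fixed-$n$ (e.g. $n=0$) terms decay like $h^{2d-1}$, so the sum over $h$ already diverges for every $d\ge 0$; the paper's own asymptotic table records this $h^{2d-1}$ rate, so its conclusion for $0\le d<1/4$ does not follow from the displayed estimates (only $d<0$ would). Your approach is the one that actually closes the argument, but only because it refuses to throw away the tempering. Second, the genuine gap in your proposal is the boundary case $\lambda=0$, which the definition permits ($\lambda\ge 0$) and which is the only regime where the restriction $d<1/4$ could matter: there your $\ell^1$ bound on $b_n$ fails for $d\ge 0$, and you leave the needed oscillation or spectral argument (e.g. via $f_x$ at the Gegenbauer frequency) unexecuted. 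Relatedly, your rationale for $d<1/4$ — keeping $\gamma_w(0)=\sigma^2\sum_n b_n^2$ finite as $u\to 1$ — concerns the variance, not $\sum_h\gamma(h)$, and is outside the theorem's hypotheses since $|u|<1$ is assumed. So: for $\lambda>0$ your proof is correct and sharper than the paper's; for $\lambda=0$ neither your sketch nor the paper's proof establishes the claim.
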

\begin{proof} We have,
\begin{align*}
\sum_{h=0}^{\infty}\gamma(h)&=\sigma^2\sum_{h=0}^{\infty}\sum_{n=0}^{\infty}C_n^d(u)e^{-n\lambda} C_{n+h}^d(u)e^{-(n+h)\lambda}.
\end{align*}
Hence
\begin{align*}
\sum_{h=0}^{\infty}|\gamma(h)|&\le \sum_{h=0}^{\infty} \sum_{n=0}^{\infty} |C_n^d(u)e^{-n\lambda}||C_{n+h}^d(u)e^{-(n+h)\lambda}|\\
&\le \sum_{h=0}^{\infty} \sum_{n=0}^{\infty}|C_n^d(u)||C_{n+h}^d(u)|\\&\le \sum_{h=0}^{\infty} \sum_{n=0}^{\infty}C_n^d(1)C_{n+h}^d(1)\\
&=\sum_{h=0}^{\infty} \sum_{n=0}^{\infty}\frac{\Gamma(2d+n)}{\Gamma(2d)\Gamma(n+1)}\frac{\Gamma(2d+n+h)}{\Gamma(2d)\Gamma(n+h+1)}
\end{align*}
Note that
\begin{equation}
\frac{\Gamma(2d+n)}{\Gamma(2d)\Gamma(n+1)}\frac{\Gamma(2d+n+h)}{\Gamma(2d)\Gamma(n+h+1)} \sim \left\{
	\begin{array}{ll}
		n^{4d-2}  & \mbox{as } n \rightarrow \infty,\;{\rm and }\; h\;{\rm finite}, \\
		h^{2d-1}  & \mbox{as } h \rightarrow \infty,\;{\rm and }\; n\;{\rm finite},\\
		2^{2d-1}n^{4d-2}  & \mbox{as } n \rightarrow \infty,\;{\rm and }\; h\rightarrow \infty.
	\end{array}
\right.
\end{equation}
Thus $\sum_{h=0}^{\infty}|\gamma(h)| < \infty$ if $d<1/4$, which completes the proof.
\end{proof}

\section{Parameter Estimation and Real-world Application}\label{section4}
In this section, the methodology for parameter estimation of GARTFIMA process is discussed. The parameter estimation of GARTFIMA process is done by adopting the non-linear least square (NLS) based approach discussed for ARTFIMA process in Section \ref{ARTFIMA}. Further, the parameter estimation is done using the Whittle likelihood method. These methods are adopted for estimating parameters $d, \lambda$ and $u$.  Firstly we will discuss the estimation using the NLS approach.\\

\noindent\textbf{Non-linear least square (NLS) approach:}\\
Similar to the procedure discussed for ARTFIMA process, let  $(1-2ue^{-\lambda} B+e^{-2\lambda} B^2)^d X_t=U_t$ and the spectral density of this process $U_t=\dfrac{\Theta(B)}{\Phi(B)}\epsilon_t$ is given by

\begin{align}\label{SD(GARTFIMA)}
f_u(\omega)=\dfrac{\sigma^2}{2\pi}\dfrac{|\Theta(e^{-\iota\omega})|^2}{|\Phi(e^{-\iota\omega})|^2}.
\end{align}

\noindent Substituting \eqref{SD(GARTFIMA)} in \eqref{SD}, the spectral density of $X_t$ can be written as

\begin{align}\label{SD1}
f_x(\omega)=f_u(\omega)\left|(1-2ue^{-(\lambda+\iota\omega)}+e^{-2(\lambda+\iota\omega)})\right|^{-2d}
= f_u(\omega)(A-B \cos(\omega)+C \cos^2(\omega))^{-d},
\end{align}
\noindent where $A=(1-4u^2e^{-2\lambda}-2e^{-2\lambda}+e^{-4\lambda}),\; B=2ue^{-\lambda}(1+e^{-2\lambda})$ and $C= 4e^{-\lambda}.$ Consider $\omega_j=\dfrac{2\pi j}{n}$, $j=0,1,\hdots,[n/2]$. Here $n$ is the sample size and $\omega_j$ is the set of harmonic frequencies. Taking natural logarithm on both sides of \eqref{SD1} with some manipulation leads to 
\begin{align*}
 \log\{f_x(\omega_j)\}&=\log\{f_u(0)\}+\log\{f_u(\omega_j)\}-d \log\{(A-B \cos(\omega_j)+C \cos^2(\omega_j))\}-\log\{f_u(0)\}\\
 &=\log\{f_u(0)\}-d \log\{(A-B \cos(\omega_j)+C \cos^2(\omega_j))\}+\log\Bigg\{\dfrac{f_u(\omega_j)}{f_u(0)}\Bigg\}.
\end{align*}
Adding $\log\{I(\omega_j)\}$ on both sides, it follows
\begin{align}\label{ref11}
 \log\{I(\omega_j)\}&=\log\{f_u(0)\}-d \log\{(A-B \cos(\omega_j)+C \cos^2(\omega_j))\}+\log\Bigg\{\dfrac{f_u(\omega_j)}{f_u(0)}\Bigg\}+\log\Bigg\{\dfrac{I(\omega_j)}{f(\omega_j)}\Bigg\},
\end{align}
where $I(\omega_j)$ is the periodogram or empirical spectral density of the process stated in \eqref{Iw}. Using \eqref{SD(GARTFIMA)}, $f_u(0)$ can be written as
\begin{align*}
    f_u(0)=\frac{\sigma^2}{2\pi}\frac{(1-\theta_1-\hdots-\theta_q)^2}{(1-\phi_1-\hdots-\phi_p)^2}.
\end{align*}

\noindent Now the estimation is done by minimizing the sum of squared errors in \eqref{ref11}, where the error is given by $\log(I(\omega_j)/f_x(\omega_j))$ and $\log\{f_u(0)\}$ is the intercept term for the equation. Also, choosing the $\omega_j$ near $0$, the term $\log\Bigg\{\dfrac{f_u(\omega_j)}{f_u(0)}\Bigg\}$ will be negligible. So the upper limit of $j$ should be chosen such that the $\omega_j$ is small or near zero. From \eqref{ref11}, we have
\begin{align}\label{ref22}
 \log\{I(\omega_j)\}&=\log\{f_u(0)\}-d \log\{(A-B \cos(\omega_j)+C \cos^2(\omega_j))\}+\log\Bigg\{\dfrac{I(\omega_j)}{f(\omega_j)}\Bigg\}.
\end{align}

\noindent One can estimate the parameters $d, \lambda$ and $u$ by using \eqref{ref11} or \eqref{ref22}. Generally, the estimates using \eqref{ref11} are better since it uses all the terms. In this article, we have done the parameter estimation by minimizing the squared error based on \eqref{ref11}, by using the R package {\it nloptr} which uses {\it nlxb} function for non-linear optimization. \\\

\noindent\textbf{Whittle likelihood estimation:}
 The Whittle likelihood estimation is a periodogram based technique. It employs spectral techniques to approximate the spatial likelihood which can be calculated using the Fast Fourier transform or spectral density of the time series $\{X_t\}$.  Consider the set of harmonic frequencies $\omega_j$, $j=0,1,\hdots, n/2$, the empirical spectral density is

\begin{align*}
I_x(\omega_j)=\dfrac{1}{2\pi}\Bigg\{R(0)+\sum_{s=1}^{n-1}R(s)\cos(s\omega_j)\Bigg\},
\end{align*}
where $\omega_j=2\pi j/n$, $j=0,1,\hdots, n/2$. The spectral density of the GARTFIMA$(0, d, \lambda, u, 0)$ process is
\begin{align}\label{SD2}
f_x(\omega)=\dfrac{\sigma^2}{2\pi}|\Psi(z)|^2
=\dfrac{\sigma^2}{2\pi}\dfrac{|\Theta(z)|^2}{|\Phi(z)|^2}(A-B \cos(\omega)+C \cos^2(\omega))^{-d}.
\end{align}

\noindent The whittle likelihood denoted by $l_w(\theta)$ is defined as

\begin{align*}
l_w(\theta)=\sum_{j=1}^{n}\frac{I_x(\omega_j)}{f_x(\omega)}+\log(f_x(\omega)),
\end{align*}
where $\theta$ is the unknown parameters vector given by $\theta = (d, \lambda, u)$. The estimates of the parameters $d, \lambda
$ and $u$ are obtained by minimizing the likelihood function $l_w(\theta)$ with respect to $\theta$.

\subsection{Simulation study}
To assess the performance of the introduced parameter estimation techniques, we use simulated data. The GARTFIMA process defined in \eqref{eq1_} can be written as
\begin{align}\label{new_eq}
\Phi(B)X_t=\Theta(B)(1-2ue^{-\lambda}B+e^{-2\lambda}B^2)^{-d}\epsilon_t.
\end{align}
To simulate a series from GARTFIMA process first we simulate an innovation series $\epsilon_t\sim {N}(\mu,\sigma^2)$. Then we generate another series $\eta_t=(1-2ue^{-\lambda}B+e^{-2\lambda}B^2)^{-d}\epsilon_t$ by taking the binomial expansion of $(1-2ue^{-\lambda}B+e^{-2\lambda}B^2)^{-d}$ and ignoring higher order terms. The process \eqref{new_eq} takes the following form
\begin{align*}
\Phi(B)X_t=\Theta(B)\eta_t,
\end{align*}
which is an ARMA$(p,q)$ process. Then using the package arima in R the GARTFIMA process can be simulated. We generate two series using different parameter combinations and evaluate the model performance using the parameter estimation based on non-linear regression. Assuming the innovation $\epsilon_t\sim \mathcal{N}(0,2)$ and taking $d=0.4$, $\lambda = 0.2$ and $u=0.1$ the series $\eta_t$ is simulated and then taking lags $p=1$ and $q=0$ we generate a synthetic GARTFIMA$(1, 0.4, 0.2, 0.1, 0)$ series. We also consider a synthetic GARTFIMA$(1, 0.5, 0.3, 0.2, 0)$ series. The actual and estimated parameters are shown in Table \ref{table:2}.

\begin{table}[ht!]
\begin{center}
\begin{tabular}{||c||c||c|}
\hline 
 & Actual & Estimated \\ 
\hline 
Case 1 & $d = 0.4$, $\lambda = 0.2$, $u = 0.1$& $\hat{d} = 0.41$, $\hat{\lambda} = 0.18$, $\hat{u} = 0.11$\\ 
\hline 
Case 2&  $d = 0.5$, $\lambda = 0.3$, $u = 0.2$ & $\hat{d} = 0.51$, $\hat{\lambda} = 0.30$, $\hat{u} = 0.19$\\ 
\hline 
\end{tabular}
\caption{Actual and estimated parameters values for different choices of parameters using NLS approach.}\label{table:2}
\end{center}
\end{table}

\noindent Moreover, to measure the effectiveness of the NLS technique based on empirical spectral density, box plots for different parameters are constructed. To construct the box plots a simulation of 1000 series assuming the parameters $d=0.4$, $\lambda=0.2$ and $u=0.1$ is done each with 1000 observations and the parameters $d$, $\lambda$ and $u$ are estimated using the NLS based estimation and the corresponding box plot for these estimated parameters from each simulation are shown in Fig \ref{fig:2} (left panel). Also, a simulation is performed for another combination of parameters that is $d=0.5$, $\lambda=0.3$ and $u=0.1$ and the box-plots for $1000$ simulations are given in Fig \ref{fig:2} (right panel).

\begin{figure}[ht!]
    \centering
    \subfloat[\label{fig:2a}]{{\includegraphics[width=8cm]{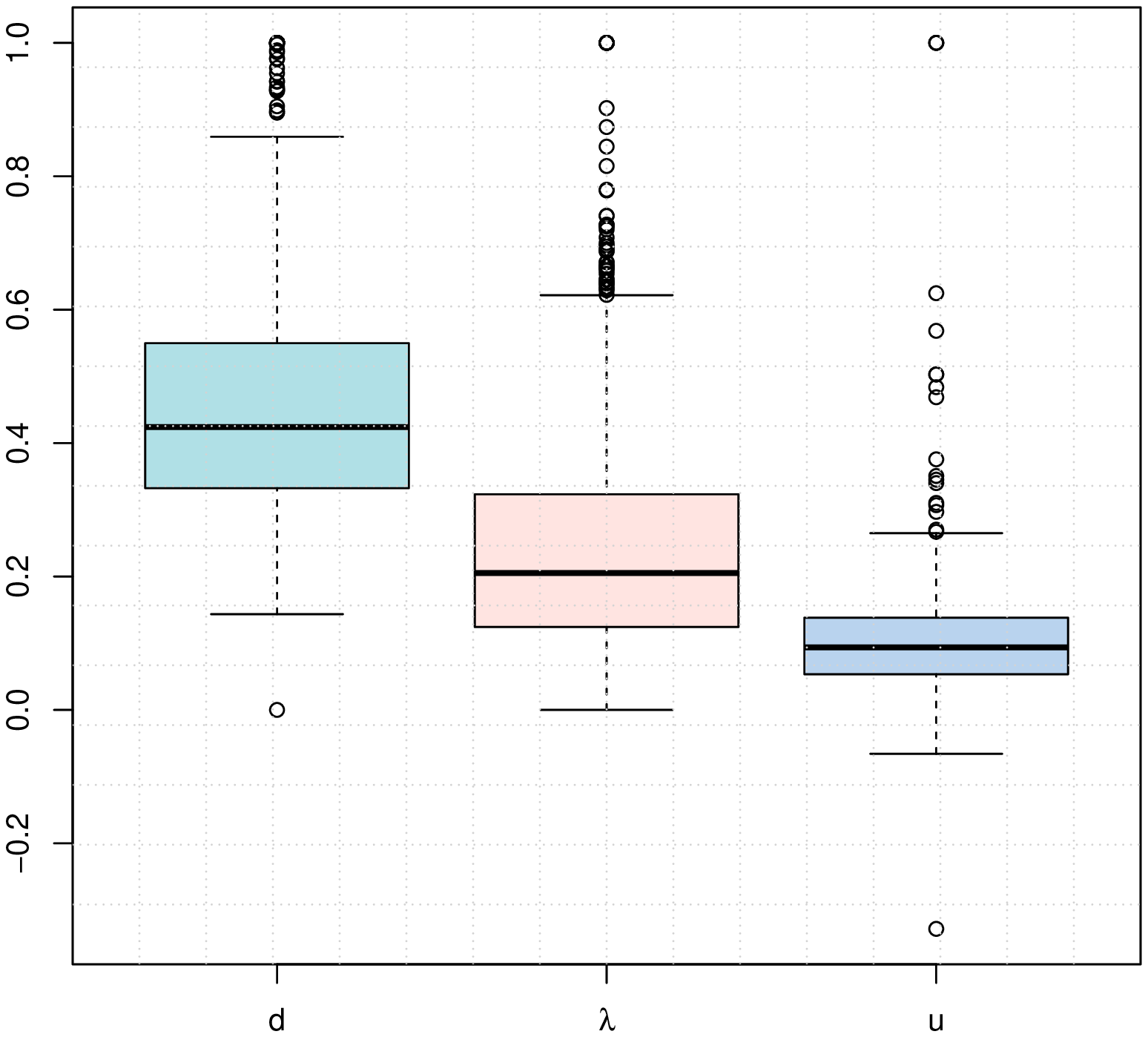} }}%
    \qquad
    \subfloat[\label{fig:2b}]{{\includegraphics[width=8cm]{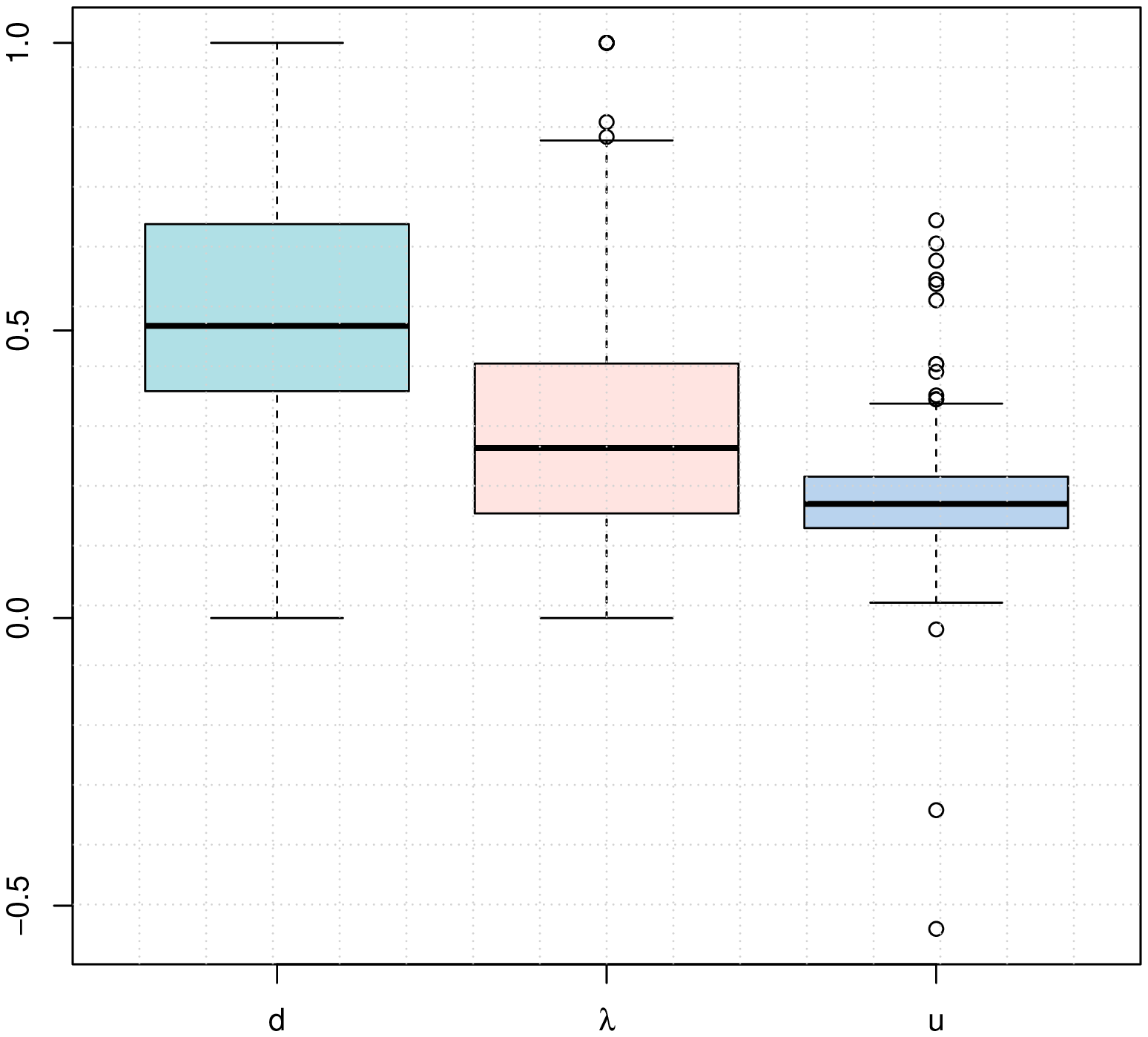} }}%
    \caption{Box plot of parameters using 1000 samples for $d = 0.4$, $\lambda = 0.2$ and $u=0.1$ (Fig \ref{fig:2a}) and for $d = 0.5$, $\lambda = 0.3$ and $u=0.2$ (Fig \ref{fig:2b}) based on NLS approach.}%
    \label{fig:2}%
\end{figure}

\noindent Next, we apply the Whittle likelihood technique on the same generated series with the same parameters combinations given above i.e. GARTFIMA$(1, 0.4, 0.2, 0.1, 0)$ and GARTFIMA$(1, 0.5, 0.3, 0.2, 0)$, and the corresponding estimates are summarized in table \ref{table:3}.

\begin{table}[ht!]
\begin{center}
\begin{tabular}{||c||c||c|}
\hline 
 & Actual & Estimated \\ 
\hline 
Case 1 & $d=0.4$, $\lambda=0.1$, $u=0.2$&$\hat{d}=0.37$, $\hat{\lambda}=0.08$, $\hat{u}=0.2$\\ 
\hline 
Case 2&  $d=0.5$, $\lambda=0.3$, $u=0.1$ & $\hat{d}=0.49$, $\hat{\lambda}=0.33$, $\hat{u}=0.08$\\ 
\hline 
\end{tabular}
\caption{Actual and estimated parameters values for different choices of parameters based on the Whittle likelihood.}\label{table:3}
\end{center}
\end{table}
\noindent Further, similar to the NLS estimation technique the parameters are estimated for 1000 simulations and each sample with 1000 observations using the Whittle likelihood  approach and box-plots are constructed for the same. The box-plots are shown in Fig \ref{fig:3}. 

\begin{figure}[H]
    \centering
    \subfloat[\label{fig:3a}]{{\includegraphics[width=8cm]{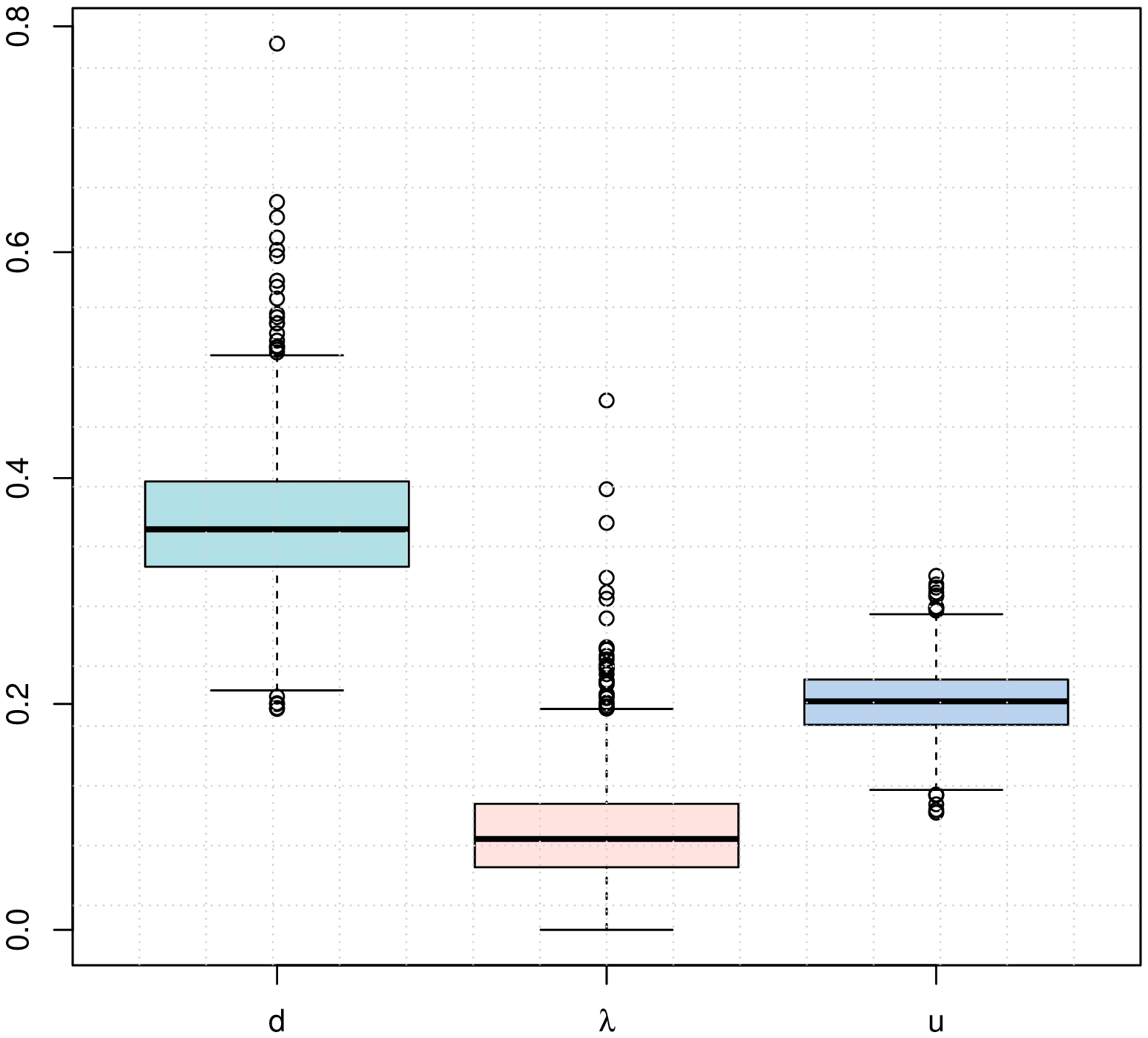} }}%
    \qquad
    \subfloat[\label{fig:3b}]{{\includegraphics[width=8cm]{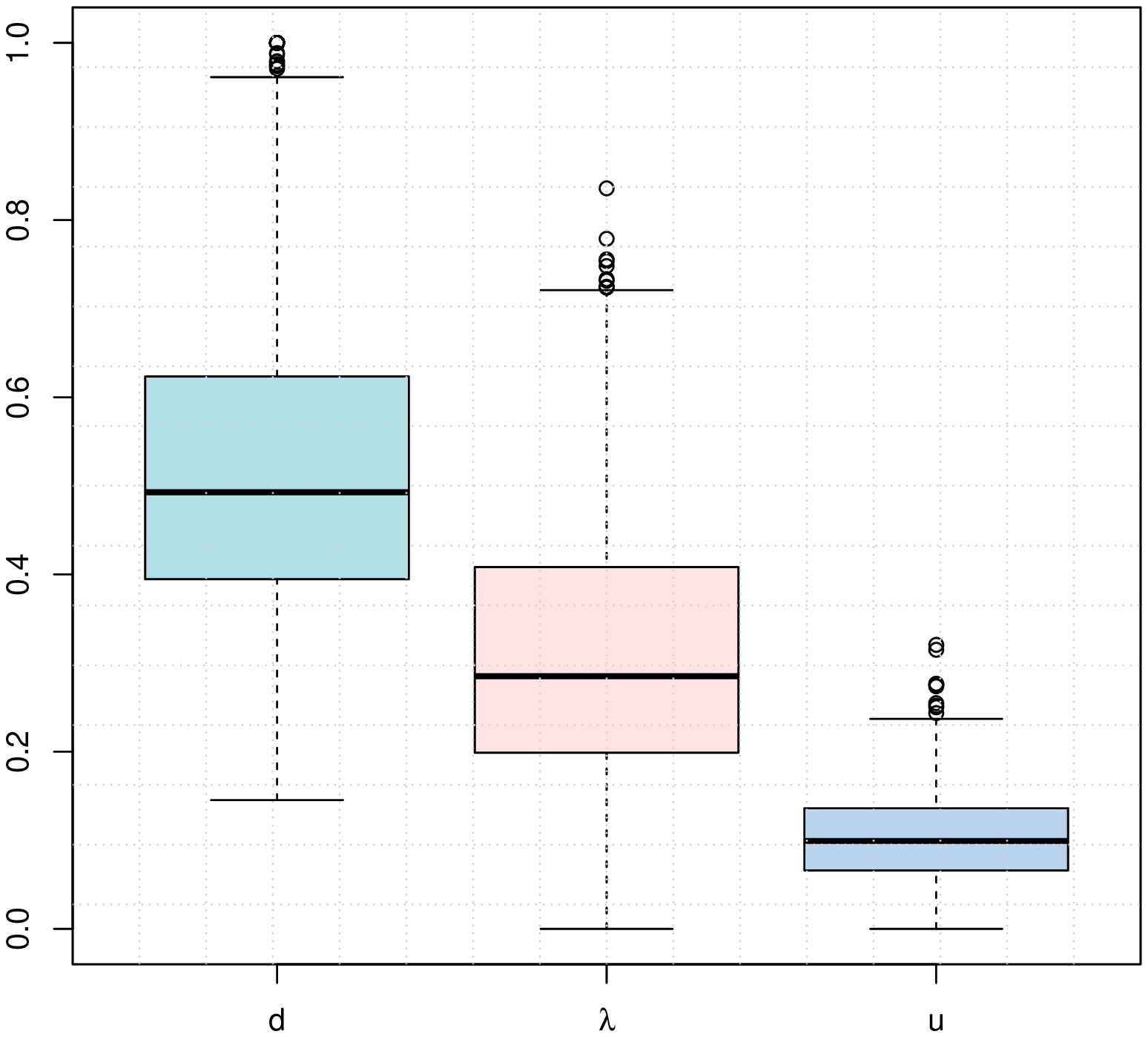} }}%
    \caption{Box plot of parameters using 1000 samples for $d = 0.4$, $\lambda = 0.1$ and $u=0.2$ (Fig \ref{fig:3a}) and for $d = 0.5$, $\lambda = 0.3$ and $u=0.2$ (Fig \ref{fig:3b}) based on the Whittle likelihood.}%
    \label{fig:3}%
\end{figure}

\subsection{Real data application}

To compare the performance of the introduced GARTFIMA process with existing time series models, two real-world data sets are considered. In this section, the comparison of the introduced model is done with existing time series models namely ARFIMA,  ARTFIMA and GARMA. We use two datasets for the comparison task and the first is the ``Nile annual minima" dataset\footnote{\url{https://rdrr.io/cran/FGN/man/NileMin.html}} which is a dataset of the annual minimum flow of the Nile river from 622 AD to 1284 AD. The ``Nile annual minima" dataset is already defined in R containing 663 observations. Another dataset is Spain's 10-year treasury bond daily percentage yield data\footnote{\url{https://www.investing.com/rates-bonds/spain-10-year-bond-yield-historical-data}} from July 2nd, 2012 to February 16th, 2017. The comparison study is given as follows.\\

\noindent\textbf{Nile river data}\\
\noindent The GARTFIMA model is applied to the ``Nile Annual Minima" dataset which is a dataset of annual minimum flow of the Nile river from The sample path of data is plotted in Fig \ref{fig:4a} also the ACF and PACF plots are given in Fig \ref{fig:4b} and \ref{fig:4c}. The ACF plot is significant for large lag values this indicates the presence of long memory property in the dataset. The AR and MA lags  are $p=3$ and $q=0$ respectively.

\begin{figure}[ht!]
    \centering
    \subfloat[Nile river annual minimum flow series \label{fig:4a}]{{\includegraphics[width=5cm]{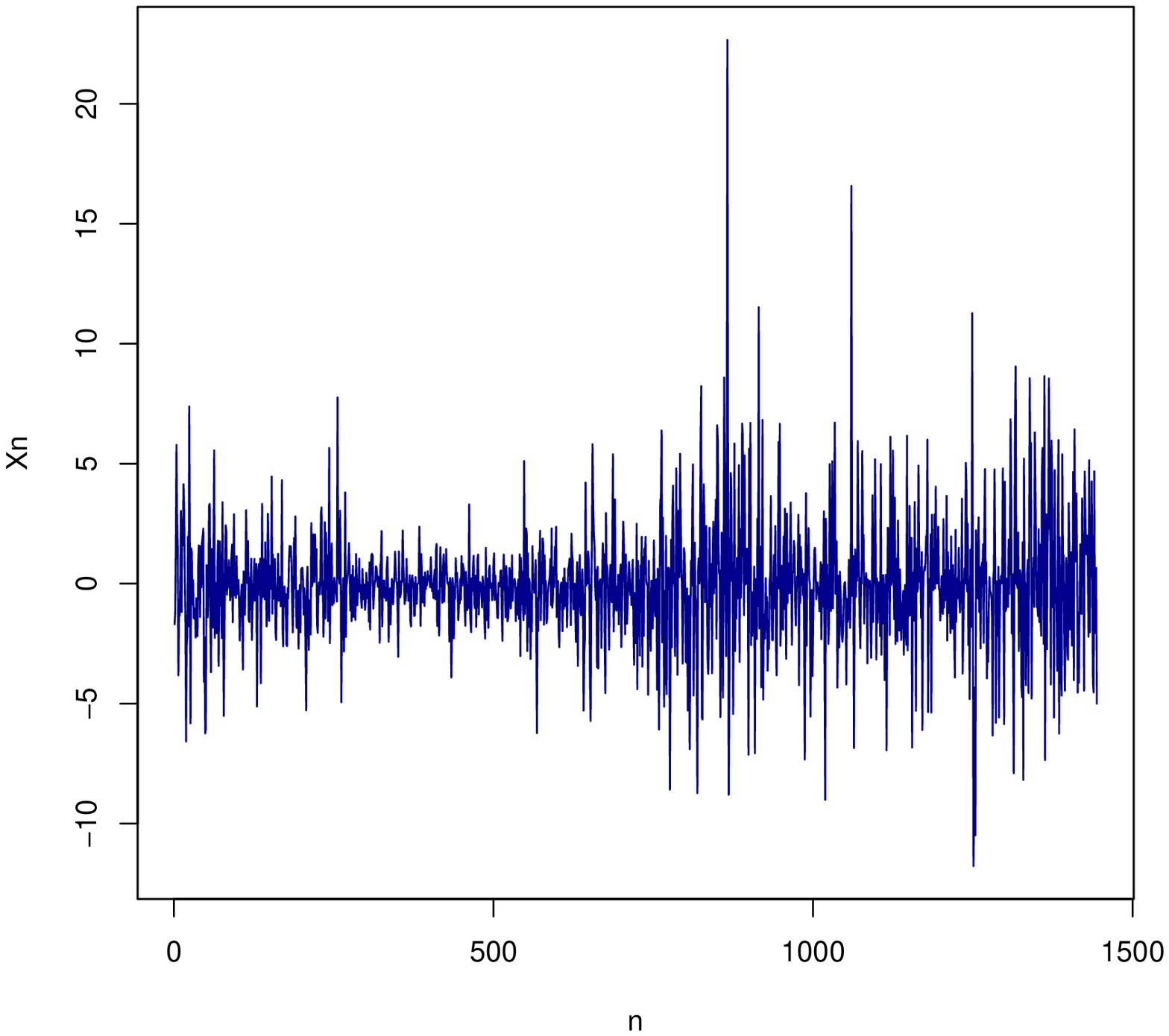} }}%
    \qquad
    \subfloat[ACF\label{fig:4b}]{{\includegraphics[width=5cm]{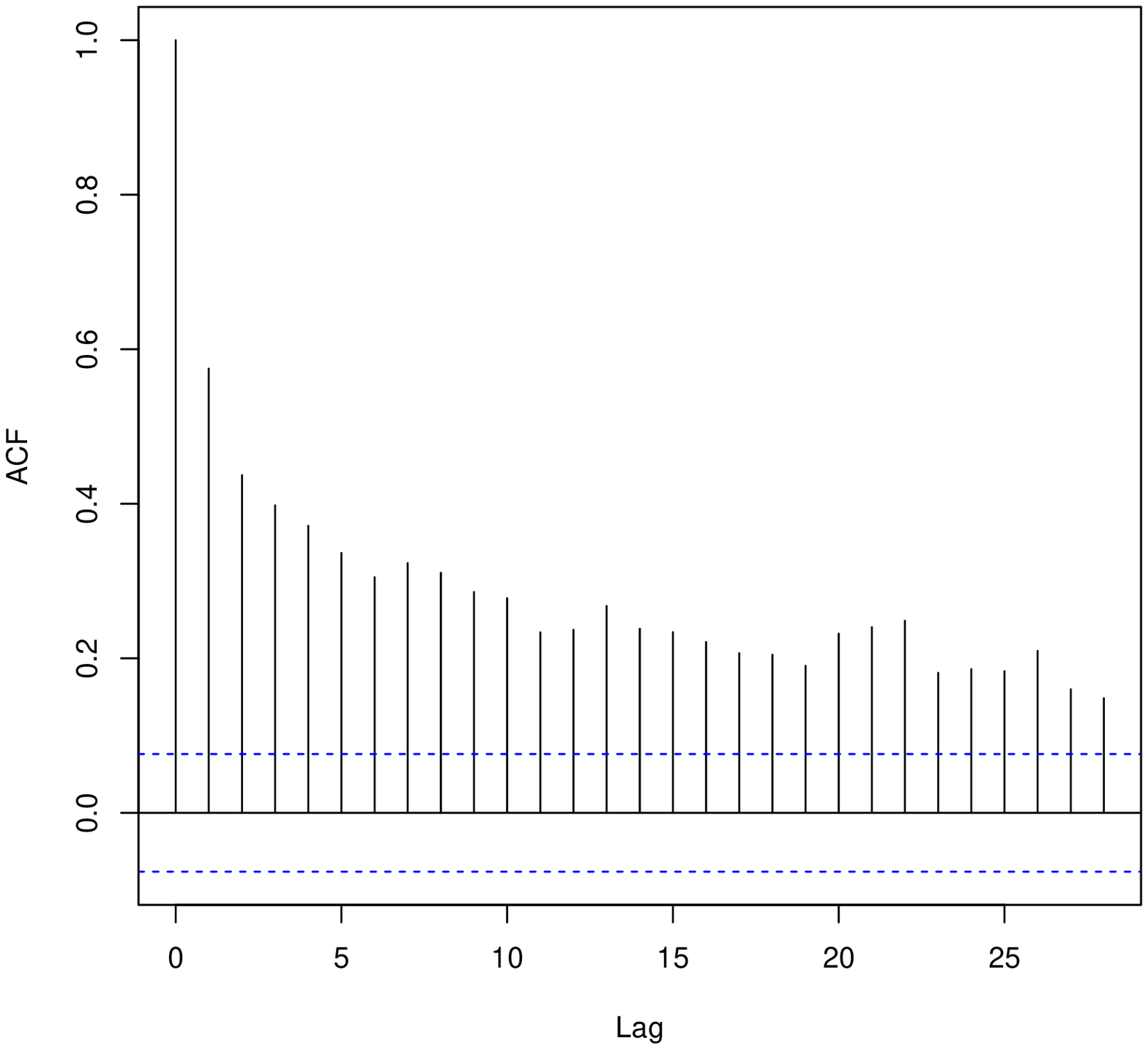} }}%
    \qquad
    \subfloat[PACF\label{fig:4c}]{{\includegraphics[width=5cm]{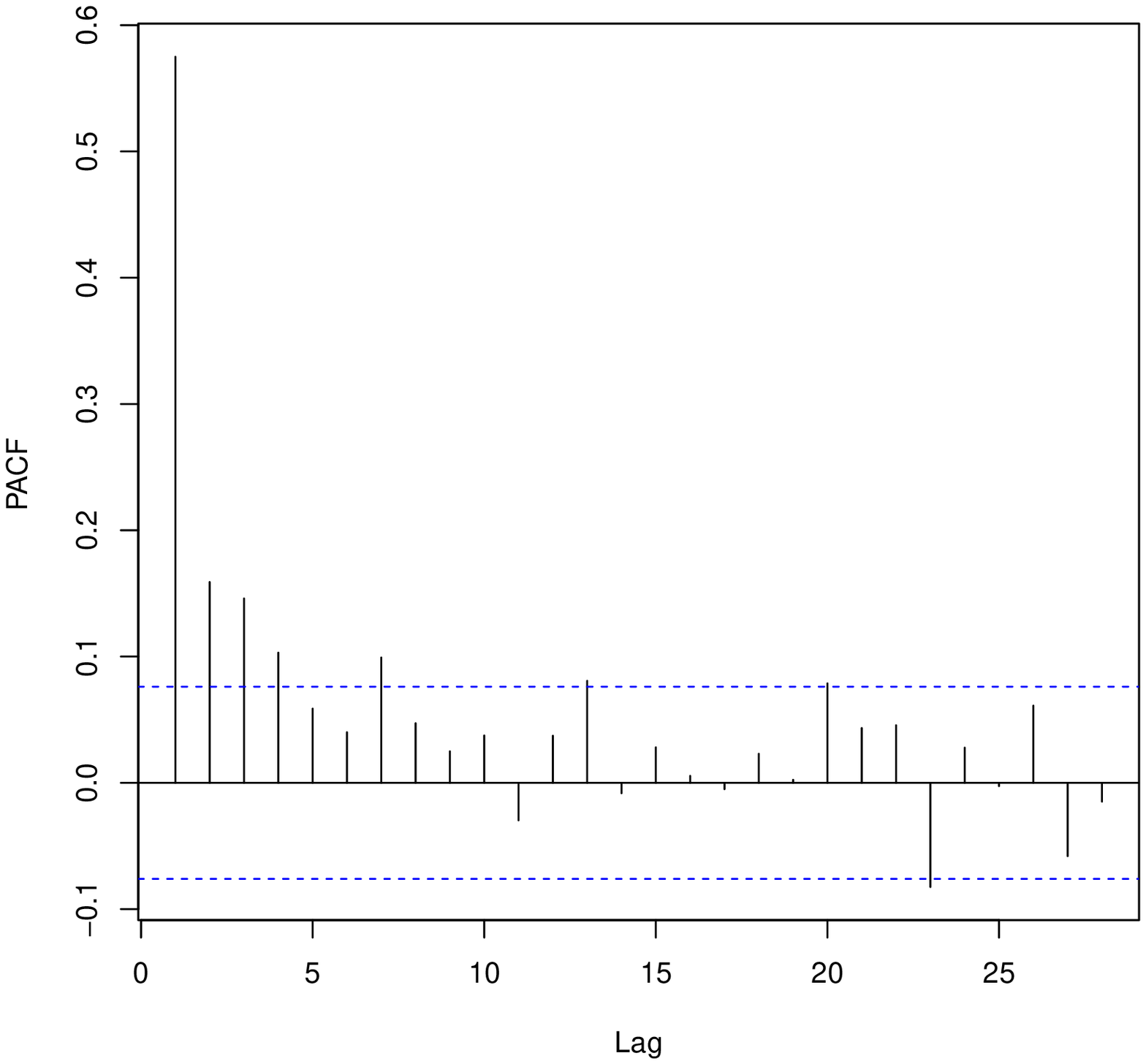} }}%
    \caption{Annual minimum flow series, ACF and PACF plot for Nile annual minima data from left to right respectively.}%
    \label{fig:4}%
\end{figure}

\noindent We apply the ARFIMA, ARTFIMA and GARTFIMA models to the Nile minima dataset and check the performance of each model on the dataset by looking at the root mean squared errors for each model. By taking almost $75\%$ of the data for the training set and $25\%$ of data in the test set we will train ARFIMA and ARTFIMA model using the R packages ``arfima" and ``artfima" respectively and check the performance on test set using ``rmse" values. Further, using the nonlinear least square estimation technique for GARTFIMA process the results are summarized in Table \ref{table:4}.  

\begin{table}[ht!]
\begin{center}
\begin{tabular}{||c||c||c|}
\hline 
 & Estimated parameters & RMSE value \\ 
\hline 
ARFIMA process & $\hat{d} = 0.39$& 0.72\\ 
\hline 
ARTFIMA process &  $\hat{d}=0.35$, $\hat{\lambda}=0.007$ &0.69 \\ 
\hline 
GARMA process &  $\hat{d}=0.2$, $\hat{u}=0.99$ &0.74 \\ 
\hline 
GARTFIMA process &  $\hat{d}=-0.16$, $\hat{\lambda}=0$, $\hat{u}=0.89$ & 0.68\\ 
\hline 
\end{tabular}
\caption{Model performance comparison where ARFIMA, ARTFIMA and GARMA are estimated using inbuilt R packages and GARTFIMA parameters are estimated using NLS estimation.}\label{table:4}
\end{center}
\end{table}

\noindent This indicates that the GARTFIMA process with NLS estimation approach fits the model equivalent to ARFIMA and ARTFIMA processes.  Further, the white noise can be plotted using actual and predicted values of the time series and to look for the normality of white noise series we plot the density of actual white noise with a synthetic white noise series. A synthetic random series of noise is generated from using the mean and variance of the actual series which are $0.063$ and $0.47$ respectively. The density plots for both the series are given in Fig. \ref{fig:5}, where the blue plot is for actual and black is for synthetic dataset.

\begin{figure}[H]
   \centering
   \includegraphics[width=0.6\textwidth, height = 0.4\textheight]{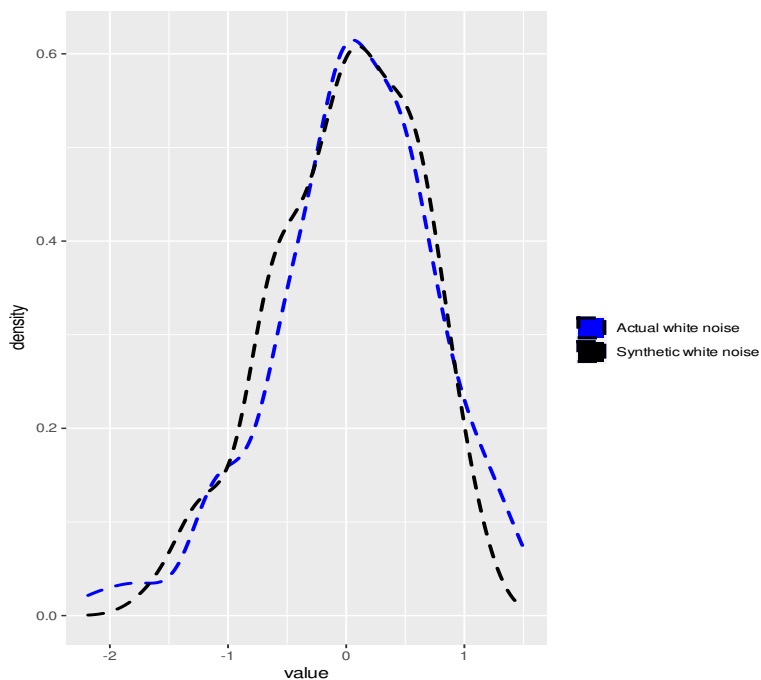}
   \caption{Density plot for actual and synthetic white noise series of the Nile annual minima dataset.}
   \label{fig:5}
\end{figure}

\noindent\textbf{Spain's 10-year treasury bond data}\\
The other dataset used for comparison is the 10-year bond yield of Spain which contains 1443 observations. The trajectory, ACF and PACF plots of the dataset is given in Fig \ref{fig:6a}, \ref{fig:6b} and \ref{fig:6c}. We get the AR and MA lags denoted by $p$ and $q$ and using Akaike information criterion (AIC) which comes out to be $p=3$ and $q=2$. 

\begin{figure}[ht!]
    \centering
    \subfloat[Spain's 10 year treasury yield series\label{fig:6a}]{{\includegraphics[width=5cm]{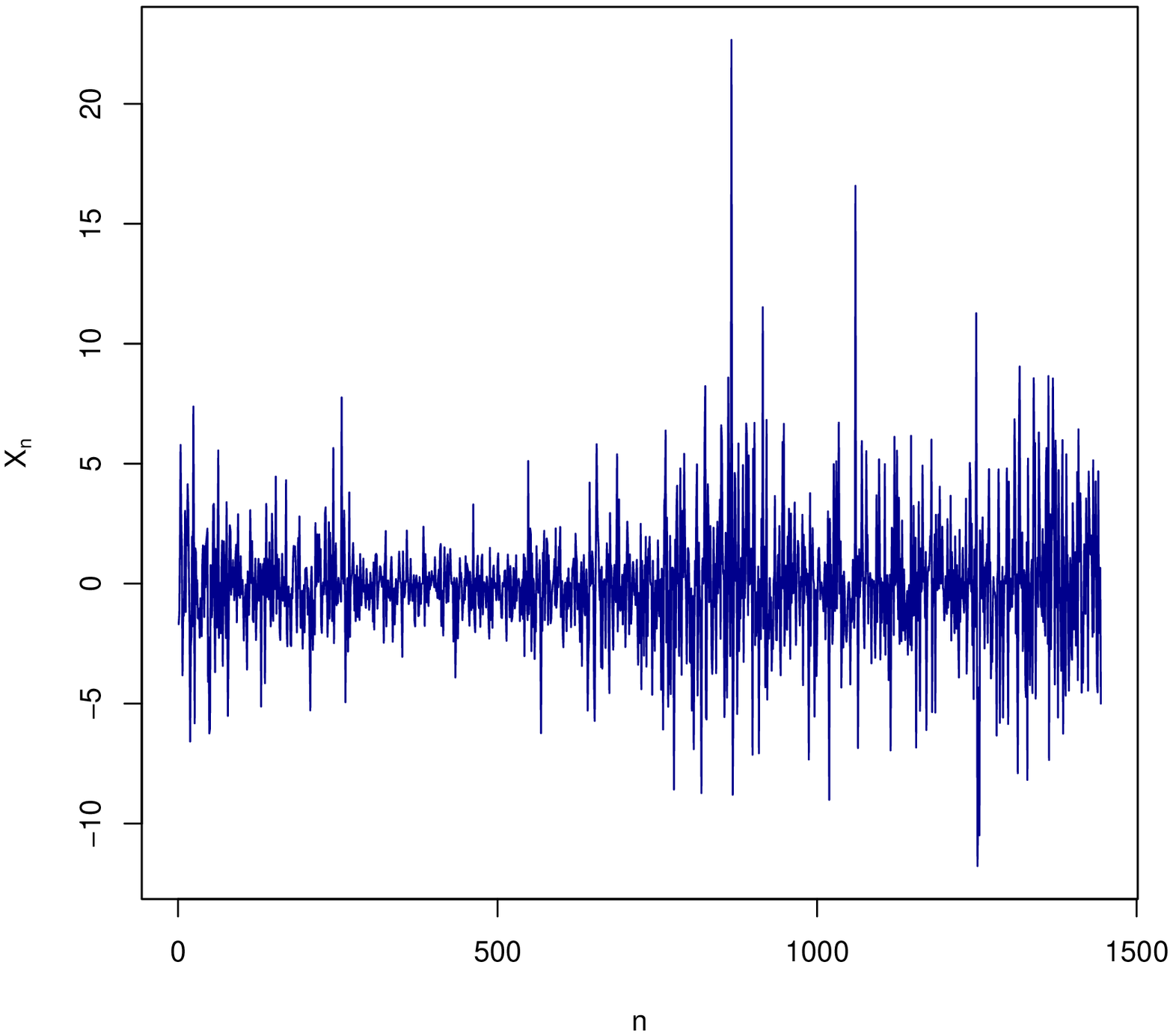} }}%
    \qquad
    \subfloat[ACF\label{fig:6b}]{{\includegraphics[width=5cm]{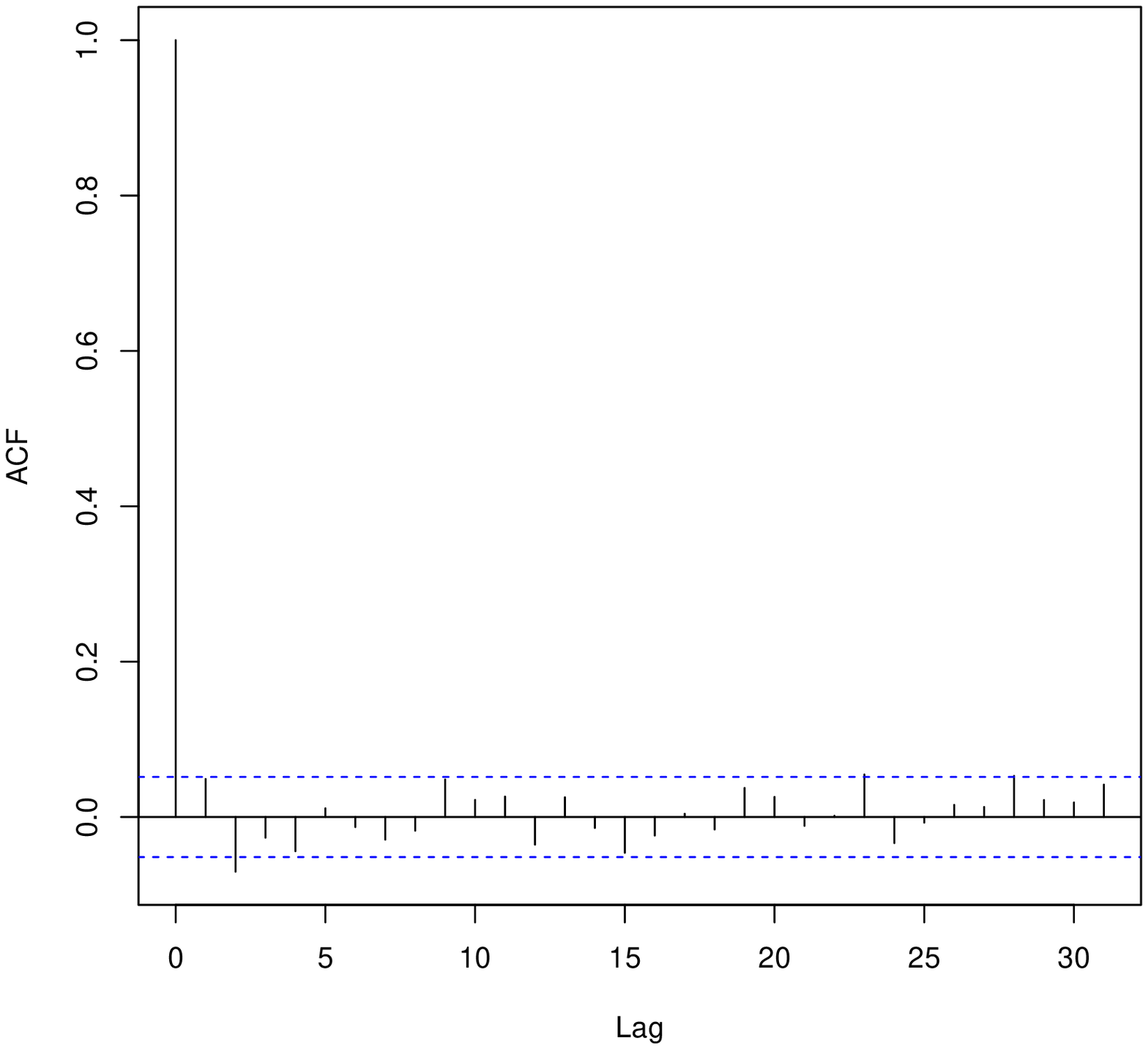} }}%
    \qquad
    \subfloat[PACF\label{fig:6c}]{{\includegraphics[width=5cm]{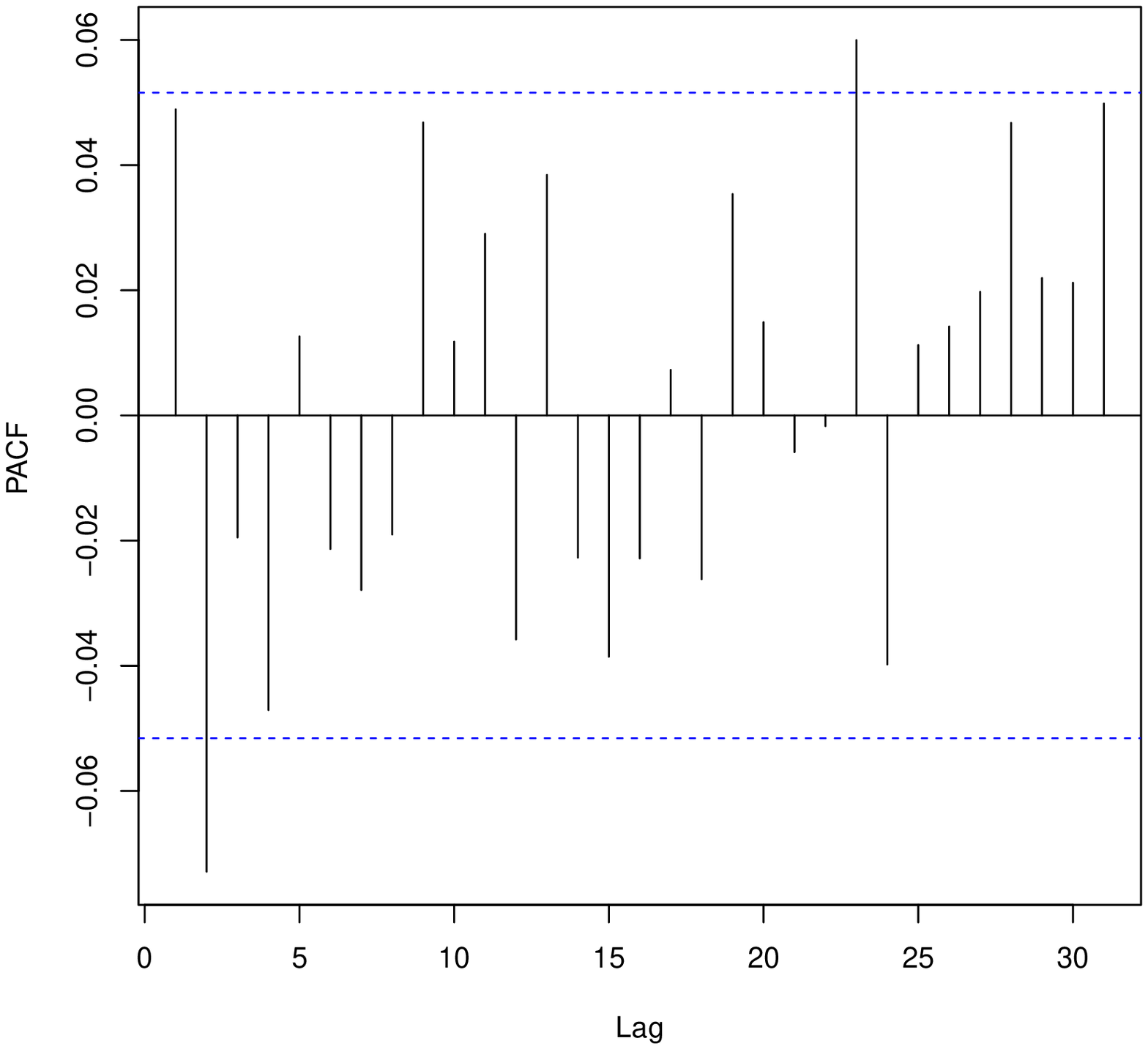} }}%
    \caption{Treasure yield series, ACF and PACF plots for Spain's 10-year bond data from left to right respectively.}%
    \label{fig:8}%
\end{figure}

\noindent Similar to the previous approach defined for Nile annual minima data, we use R to compare the performance of ARFIMA, ARTFIMA and GARTFIMA models. The results are summarized in the following table \ref{table:6}.
\begin{table}[ht!]
\begin{center}
\begin{tabular}{||c||c||c|}
\hline 
 & Estimated parameters & RMSE value \\ 
\hline 
ARFIMA process & $\hat{d} = -0.03$& 3.24\\ 
\hline 
ARTFIMA process &  $\hat{d}=-0.23$, $\hat{\lambda}=0.27$ &3.26 \\ 
\hline 
GARMA process & $\hat{d} = 0.07$, $\hat{u}=0.27$ & 3.24\\ 
\hline 
GARTFIMA process &  $\hat{d}=0.06$, $\hat{\lambda}=0.21$, $\hat{u}=0.41$ & 3.20\\ 
\hline 
\end{tabular}
\caption{Model performance comparison for GARTFIMA using the NLS estimation.}\label{table:6}
\end{center}
\end{table}
\noindent Table \ref{table:6} indicates that the GARTFIMA process using NLS technique for estimation performs slightly better than the ARFIMA and ARTFIMA processes. Moreover, a synthetic white noise series is generated using the mean and variance of actual white noise which are $0.48$ and $12.19$ respectively. Also, the density plots for both the actual and synthetic white noise series are given in Fig. \ref{fig:7} where the blue plot is for actual and black is for synthetic dataset.

\begin{figure}[H]
   \centering
   \includegraphics[width=0.6\textwidth, height = 0.4\textheight]{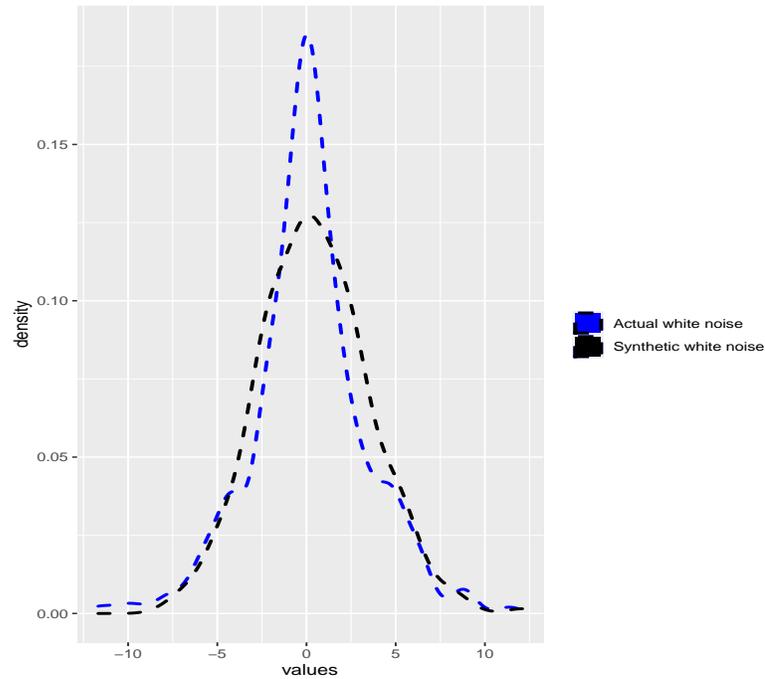}
   \caption{Density plot for actual and synthetic white noise series of the Spain's 10-year treasury bond dataset.}
   \label{fig:7}
\end{figure}

\section{Conclusions} In this article, we introduce GARTFIMA process which in particular cases includes ARIMA, ARFIMA, GARMA and ARTFIMA processes. The parameter estimation of the introduced model is done based on empirical spectral density. The estimation methods work satisfactorily as shown using simulated data and is depicted using box-plots. see Fig. \ref{fig:2} and Fig. \ref{fig:3}. The introduced model slightly better explains the Nile river data which is evident by comparing the RMSE with the other models. We believe that the model can be used in real-world data from diverse areas such as economics, finance, geophysics, ecology and others. It is well known that  generally financial time series have fat tails, demonstrate volatility clustering and possess nonlinear dependence. The introduced GARTFIMA model can be generalized to GARTFIMA-ARCH, GARTFIMA-GARCH, GARTFIMA model with heavy-tailed innovations or periodic GARTFIMA model. These generlized versions of GARTFIMA model will be helpful in better explaining real-world data.
\vspace{0.2cm}

\noindent {\bf Acknowledgements:} 
N.B. would like to thank Ministry of Education (MoE), India for supporting her PhD research.
\vone
\noindent

\end{document}